\theoremstyle{plain}
\numberwithin{equation}{section}
\newtheorem{thm}{Theorem}[section]
\newtheorem{prop}[thm]{Proposition}
\newtheorem{cor}[thm]{Corollary}
\newtheorem{lem}[thm]{Lemma}
\theoremstyle{definition}
\newtheorem{dfn}[thm]{Definition}
\newtheorem{rmk}[thm]{Remark}
\def\dim{\mathop{\mathrm{dim}}\nolimits}
\def\Hom{\mathop{\mathrm{Hom}}\nolimits}
\def\<{{\langle}}
\def\>{{\rangle}}
\newcommand{\End}[1]{{\mathrm{End}({#1})}}
\def\1{\mathop{\mathbbm{1}}\nolimits}
\def\+{\mathop{\oplus}\nolimits}
\def\Supp{\mathop{\mathrm{Supp}}\nolimits}
\def\id{\mathop{\mathrm{id}}\nolimits}
\def\Spec{\mathop{\mathrm{Spec}}\nolimits}
\newcommand{\HNF}[3]{{
\xymatrix{
0	\ar[r]	&	{#2}_1	\ar[r]\ar[d]	&	{#2}_2	\ar[r]\ar[d]	&	\cdots\ar[r]	&	{#2}_{n-1}	\ar[r]\ar[d]	&	{#2}_n={#1}\ar[d]	\\
			&	{#3}_1\ar@{-->}[ul]	&	{#3}_2\ar@{-->}[ul] &					&	{#3}_{n-1}\ar@{-->}[ul]		&	{#3}_n\ar@{-->}[ul]	
}
}}
\newcommand{\ann}[1]{{\mathrm{ann}({#1})}}
\newcommand{\Stab}[1]{{\mathsf{Stab}\,{#1}}}
\newcommand{\spec}[1]{{\mathcal{S}\mathit{pec}\, {#1}}}
\newcommand{\mf}[1]{{\mathfrak{#1}}}
\newcommand{\bb}[1]{{\mathbb{#1}}}
\newcommand{\mca}[1]{{\mathcal{#1}}}
\newcommand{\mr}[1]{{\mathrm{#1}}}
\newcommand{\mb}[1]{{\mathbf{#1}}}
\title{$R$-linear triangulated categories and stability conditions}
\author{Kotaro Kawatani\\ Appendix by Hiroyuki Minamoto}
\email{kawatanikotaro@gmail.com}
\keywords{Affine schemes, Stability conditions, Linear triangulated categories}
\date{\today}
\subjclass[2020]{14R10, 13E10}
\begin{document}
\begin{abstract}
Let $R$ be a commutative ring. 
We introduce the notion of support of a object in an 
$R$-linear triangulated category. 
As an application, we study the non-existence of 
Bridgeland stability condition on $R$-linear triangulated categories. 
\end{abstract}

\maketitle	

\section{Introduction}

Recall that a stability condition introduced by 
Bridgeland \cite{MR2373143} on a triangulated category $\mb D$ 
defines a stability of objects in the category $\mb D$. 
If the category is the bounded derived category $\mb D^{b}(X)$
 of coherent sheaves on a projective variety $X$ (over a field), 
Analogously to the 
slope/Gieseker-Maruyama stability for coherent sheaves, 
one can define the stability of complexes of coherent sheaves via 
stability conditions on $\mb D^{b}(X)$ 
if it does exist. 
Recalling the stability of sheaves are 
defined by an ample line bundle on the variety, 
a stability condition on $\mb D^{b}(X)$ can be regarded as 
a huge generalization of ample line bundles on the projecitve 
variety. 
Thus the stability condition gives a powerful tool 
to study birational geometry for moduli spaces 
not only of sheaves, but also of complexes 
and much work has been done in this direction. 
For instance, moduli spaces on K3 surfaces or abelian sufraces 
are studied by many authors (Arcara and Bertram in \cite{MR2807848} or \cite{MR2998828}, 
Bayer and Macri in \cite{MR3279532} and \cite{MR3194493} or  
Minaminde, Yanagida and Yoshioka in \cite{MR3757471}.  
the case of other surfaces are studied by 
\cite{MR3010070}, \cite{MR3921322}, \cite{MR3551850}, 
or \cite{MR4125513}

However the existence of stability conditions a triangulated category 
 is sensitive, even if the category 
 is the derived category $\mb D^{b}(X)$ of 
a smooth projective variety $X$.    
For instance, if $\dim X\leq 2$, then 
$\Stab{\mb D^{b}(X)}$ does exists. 
When $\dim X=3$, the existence follows from generalized 
Bogomolov inequality proposed by Bayer-Macri-Toda \cite{MR3121850}.

Let us discuss the case where stability conditions do not exist. 
Since a stability condition on a triangulated category naturally 
induces a bounded $t$-structure, 
if the category has no bounded $t$-structure, 
then there are no stability condition. 
Such a category can be found by using schemes 
with singularities or varieties which are non-proper. 
For instance, as mentioned by Antieau, Gepner, 
and Heller \cite{MR3935042}, if the scheme $X$ is a nodal cubic curve, 
then the triangulated category $\mb D^{\mr{perf}}(X)$ of 
perfect complexes on $X$ has no bounded $t$-structure and 
hence no stability condition. 
They also conjecture that the category of 
perfect complexes on a finite dimensional Noetherian scheme 
$X$ has a bounded $t$-structure if and only if $X$ is regular. 
If the scheme $X$ is affine, 
the conjecture holds by Smith \cite{MR4383013}. 
Recently Neeman studies the conjecture for separated case 
(cf. \cite{https://doi.org/10.48550/arxiv.2202.08861}).

Let us introduce another example for the non-existence. 
Suppose that $X$ is an affine scheme $\Spec\, R$ 
of a Noetherian ring $R$. 
Then $\Stab{\mb D^{b}(X)}$ is non-empty if and only if 
$\dim X=0$ by the author \cite{kawatani2020stability}. 
Unlike the case of perfect complexes, 
the category $\mb D^{b}(X)$ has a natural bounded $t$-structure. 
In stead of the non-existence of $t$-structures, 
our proof is based on the supports of complexes in $\mb D^{b}(X)$. 

The aim of this paper is to extend the results in 
\cite{kawatani2020stability} to more general 
$R$-linear triangulated categories. 
The main theorem is the following: 

\begin{thm}\label{thm:kanban}
Let $R$ be a Noetherian ring and 
let $\mca X \to \Spec R$ be a proper morphism of schemes. 
If the dimension of the image is positive, 
then $\mb D^{b}(\mca X)$ and $\mb D^{\mr{perf}}(\mca X)$ 
have no stability condition. 
\end{thm}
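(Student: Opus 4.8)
The plan is to route everything through the $R$-linear support $\mathrm{Supp}_R(-)$ introduced earlier. Both $\mathbf D^b(\mathcal X)$ and $\mathbf D^{\mathrm{perf}}(\mathcal X)$ are $R$-linear through $f\colon\mathcal X\to\Spec R$, and for an object $E$ of either one the support $\mathrm{Supp}_R(E)$ is the image $f(\mathrm{Supp}_{\mathcal X}E)\subseteq\Spec R$, which is closed because $f$ is proper. In particular $\mathrm{Supp}_R(\mathcal O_{\mathcal X})=f(\mathcal X)$ is a closed subset of positive dimension by hypothesis, and $\mathcal O_{\mathcal X}$ lies in \emph{both} categories and is perfect. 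So it suffices to prove the following statement about $R$-linear categories (which is presumably the intermediate proposition the support machinery is built for): if $\mathbf D$ is an $R$-linear triangulated category whose $\mathrm{Hom}$-spaces are finitely generated $R$-modules and which admits a (locally finite) Bridgeland stability condition, then $\dim\mathrm{Supp}_R(E)=0$ for every object $E$. The finiteness of $\mathrm{Hom}$'s over $R$ is exactly what properness of $\mathcal X\to\Spec R$ (with $R$ Noetherian) provides, via coherence of $R f_* R\mathcal{H}om(E,F)$.

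To prove that statement, fix a stability condition $\sigma=(Z,\mathcal P)$. First I would reduce an arbitrary object to simple objects: since the $R$-action is central, multiplication $r\cdot_E\colon E\to E$ commutes with all the functorial truncations available — the cohomology functors $H^i$ of the bounded $t$-structure $\mathcal P((0,1])$, the Harder--Narasimhan truncations of the slicing, and (using local finiteness, so that $\mathcal P((\phi-\varepsilon,\phi+\varepsilon))$ has finite length) the Jordan--Hölder subquotients inside a length slice. Consequently $\mathrm{ann}_R(E)\subseteq\mathrm{ann}_R(E')$, hence $\mathrm{Supp}_R(E')\subseteq\mathrm{Supp}_R(E)$, for every such subquotient $E'$ of $E$; combining this with subadditivity of $\mathrm{Supp}_R$ on triangles gives $\mathrm{Supp}_R(E)=\bigcup_S\mathrm{Supp}_R(S)$, the union over the simple objects $S$ occurring as Jordan--Hölder factors of the cohomology objects of $E$. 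Thus it is enough to show $\dim\mathrm{Supp}_R(S)=0$ for every simple $S$.

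For a simple $S$, Schur's lemma makes $\mathrm{End}(S)$ a division ring, so $r\cdot_S$ is invertible whenever $r\notin\mathfrak p_S:=\ker(R\to\mathrm{End}(S))$, and $\mathfrak p_S$ is prime; therefore the embedding $R/\mathfrak p_S\hookrightarrow\mathrm{End}(S)$ extends to $\mathrm{Frac}(R/\mathfrak p_S)\hookrightarrow\mathrm{End}(S)$. But $\mathrm{End}(S)$ is a finitely generated $R$-module, hence a finitely generated $R/\mathfrak p_S$-module, so its submodule $\mathrm{Frac}(R/\mathfrak p_S)$ is finitely generated over the Noetherian ring $R/\mathfrak p_S$, which forces $R/\mathfrak p_S$ to be a field, i.e. $\mathfrak p_S$ maximal. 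Then $\mathrm{Supp}_R(S)=V(\mathrm{ann}_R S)=\{\mathfrak p_S\}$ is $0$-dimensional, as required; applied to $\mathcal O_{\mathcal X}$ this contradicts $\dim\mathrm{Supp}_R(\mathcal O_{\mathcal X})\ge 1$, and the argument is identical for $\mathbf D^{\mathrm{perf}}(\mathcal X)$.

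I expect the real work, and the main obstacle, to be in the two places where I have been cavalier. First, the reduction to simple Jordan--Hölder factors needs the slices $\mathcal P((\phi-\varepsilon,\phi+\varepsilon))$ to be of finite length, i.e. local finiteness of $\sigma$; one must either incorporate this hypothesis honestly (as Bridgeland's general definition does not include it) or show that the non-existence can be reduced to the locally finite case. Second, everything rests on the support formalism of the earlier sections behaving well: that $\mathrm{Supp}_R(E)=f(\mathrm{Supp}_{\mathcal X}E)$, that $\mathrm{Supp}_R$ is subadditive on triangles and monotone under the annihilator, and that it is defined uniformly for $\mathbf D^b$ and $\mathbf D^{\mathrm{perf}}$ and compatible with localization at primes of $R$ — verifying these compatibilities is where the bulk of the support theory is needed, and getting the monotonicity $\mathrm{ann}_R(E)\subseteq\mathrm{ann}_R(E')$ to hold for \emph{all} the relevant functorial subquotients (t-structure, HN, and JH) is the technical heart of the proof.
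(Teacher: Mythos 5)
Your proposal is correct and follows essentially the same route as the paper: define $\Supp_R E$ as the support of the endomorphism module, use properness plus Noetherianity to get finiteness of $\Hom$'s over $R$, reduce via the Harder--Narasimhan filtration and (by local finiteness) Jordan--H\"older factors in the slices to $\sigma$-stable objects, show a stable object has zero-dimensional support because $\End(S)$ is a finite $R$-module on which $R\setminus\ker(R\to\End(S))$ acts invertibly, and contradict this with $\dim\Supp_R\mca O_{\mca X}>0$. The only differences are cosmetic: you handle the stable case via Schur's lemma and the ``$\mathrm{Frac}(R/\mf p)$ finite over $R/\mf p$ forces a field'' argument where the paper uses its condition (Ism) with a non-unit $r$ (Lemmas \ref{lem:vanishing-domain} and \ref{lem:dim=0}), and the subadditivity of $\Supp_R$ on triangles that you invoke is exactly the paper's Lemma \ref{lem:BinAorC}.
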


Recall that the support 
$\Supp E$ of a complex $E$ of (quasi-)coherent sheaves is 
the union of the support of each cohomology $H^{i}(E)$. 
To develop the argument in \cite{kawatani2020stability}, 
we give a generalization of the support for objects 
in an $R$-linear triangulated category $\mb D$. 
Precisely if $E$ is an object in the $R$-linear category $\mb D$, 
we define $\Supp _{R} E$ as the support of the 
$R$-module $\Hom_{\mb D}(E, E)$ of endomorphisms. 
The generalized support $\Supp _{R}E$ coincides with $\Supp E$ 
if $E$ is a bounded complex of (not necessarily finite) $R$-modules.

Using the generalized support $\Supp _{R}E$, 
we give a sufficient condition for the non-existence of 
stability conditions under a certain finiteness assumption on $\mb D$. 
The properness in Theorem \ref{thm:kanban} guarantee the finiteness. 

For a familiy $\mca X \to S$ of schemes, 
our theorem says that there is no ``absolute'' stability conditions 
unless the dimension of the base is zero. 
On the other hand, 
a stability condition is one of generalizations of slope stability 
of locally free sheaves on projective varieties. 
Recall that the slope stability derived from an ample line bundle 
and that we have relatively ample line bundles for a family 
$\mca X \to S$ of projective varieties.  
Thus it might be natural to introduce a notion of ``relative'' stability 
conditions so that a stability condition over a base does exist. 
Interestingly the notion of stability conditions over a base scheme 
for a flat family of projective varieties is introduced by 
Bayer et al. in \cite{MR4292740} which gives a kind of ``relative'' 
stability conditions, to 
study Kuznetsov's non-commutative K3 surfaces. 
Our theorem gives an algebraic evidence of the necessity of 
relative stability conditions.

\section{$R$-linear triangulated categories}
From now on, $R$ is a commutative ring. 

\begin{dfn}
Let $\mb D$ be an $R$-linear triangulated category 
and $M$ an object in $\mb D$. 
\begin{enumerate}
\item We denote by $\mu \colon R \to \Hom_{\mb D}(M,M)$ 
the morphism defined by the following composition: 
\begin{equation}
  R \times \{ \id_{M}	\}
\subset
R \times \Hom_{\mb D}(M,M)
	\to \Hom_{\mb D}(M,M). 
\end{equation}
\item 
The support of the object $M$ is defined as the support of 
$\Hom_{\mb D}(M,M)$ as $R$-modules and is denoted by $\Supp _{R}M$: 
\[
\Supp _{R}M	:=	\Supp \Hom_{\mb D}(M,M). 
\]
\end{enumerate}
\end{dfn}

\begin{lem}\label{lem:sub-quotient-closed}
Let $\mb D$ be an $R$-linear triangulated category. 
Suppose that a distinguished triangle 
$A \overset{i}{\to} B \overset{p}{\to} C$ in 
$\mb D$ satisfies $\Hom_{\mb D}^{0}(A,C)=\Hom_{\mb D}^{-1}(A,C)=0$. 
\begin{enumerate}
\item Any morphism $\varphi \colon B \to B$ uniquely indues 
morphisms  
$\varphi_{A} \colon A \to A$ and 
$\varphi_{C} \colon C \to C$ such that 
$i \cdot \varphi _{A}= \varphi _{B} \cdot i$ and 
$p \cdot \varphi _{B} =\varphi _{C}\cdot p$. 
\item If $\varphi=0$ then the morphisms $\varphi_{C}$ and 
$\varphi_{A}=0$ are zero. 
\end{enumerate}
\end{lem}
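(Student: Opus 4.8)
The plan is to exploit the triangulated structure together with the vanishing hypotheses $\Hom^{0}_{\mb D}(A,C)=\Hom^{-1}_{\mb D}(A,C)=0$, which are precisely the conditions needed so that morphisms out of $A$ and into $C$ compose through the triangle uniquely. For the existence part of (1), I would start with $\varphi_{B}\colon B\to B$ and consider the composite $p\cdot\varphi_{B}\cdot i\colon A\to C$. Since $\Hom^{0}_{\mb D}(A,C)=0$, this composite is zero, so $\varphi_{B}\cdot i\colon A\to B$ factors through $i\colon A\to B$ — that is, there is $\varphi_{A}\colon A\to A$ with $i\cdot\varphi_{A}=\varphi_{B}\cdot i$. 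Here I am using the exact sequence $\Hom_{\mb D}(A,A)\xrightarrow{i\circ-}\Hom_{\mb D}(A,B)\xrightarrow{p\circ-}\Hom_{\mb D}(A,C)$ coming from applying $\Hom_{\mb D}(A,-)$ to the triangle. Dually, applying $\Hom_{\mb D}(-,C)$ to the triangle and using $\Hom^{-1}_{\mb D}(A,C)=\Hom_{\mb D}(A,C[-1])=\Hom_{\mb D}(A[1],C)=0$ (rotating the triangle, the relevant term is $\Hom_{\mb D}$ from the shift of $A$ into $C$), one gets that $p\cdot\varphi_{B}\colon B\to C$ factors uniquely through $p\colon B\to C$, yielding $\varphi_{C}\colon C\to C$ with $p\cdot\varphi_{B}=\varphi_{C}\cdot p$.

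For uniqueness in (1): if $\varphi_{A},\varphi_{A}'$ both satisfy $i\cdot\varphi_{A}=\varphi_{B}\cdot i=i\cdot\varphi_{A}'$, then $i\cdot(\varphi_{A}-\varphi_{A}')=0$, and injectivity of $i\circ-\colon\Hom_{\mb D}(A,A)\to\Hom_{\mb D}(A,B)$ — which follows because the preceding term $\Hom_{\mb D}(A,C[-1])=\Hom^{-1}_{\mb D}(A,C)$ vanishes — forces $\varphi_{A}=\varphi_{A}'$. Symmetrically, uniqueness of $\varphi_{C}$ follows from surjectivity/injectivity considerations using $\Hom^{0}_{\mb D}(A,C)=0$: the map $-\circ p\colon\Hom_{\mb D}(C,C)\to\Hom_{\mb D}(B,C)$ is injective because the preceding term in the long exact sequence obtained from $\Hom_{\mb D}(-,C)$ applied to $A\to B\to C\to A[1]$ is $\Hom_{\mb D}(A[1],C)=\Hom^{-1}_{\mb D}(A,C)=0$, so any two lifts agree. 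Statement (2) is then immediate: if $\varphi_{B}=0$, then $\varphi_{A}=0$ and $\varphi_{C}=0$ both satisfy the defining relations (since $i\cdot 0=0\cdot i$ and $0\cdot p=p\cdot 0$), and by the uniqueness just established these are the only such morphisms.

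The one point requiring care — and the main potential obstacle — is bookkeeping the indices and directions of the shifts when reading off the long exact sequences: one must be sure that it is exactly $\Hom^{0}_{\mb D}(A,C)$ and $\Hom^{-1}_{\mb D}(A,C)$ (and not, say, $\Hom^{1}$ or $\Hom_{\mb D}(C,A)$-type groups) that control both the existence of the lifts and their uniqueness, on both the $A$-side and the $C$-side. I would handle this by writing out explicitly the two long exact sequences $\cdots\to\Hom_{\mb D}(A,C[-1])\to\Hom_{\mb D}(A,A)\to\Hom_{\mb D}(A,B)\to\Hom_{\mb D}(A,C)\to\cdots$ and $\cdots\to\Hom_{\mb D}(A[1],C)\to\Hom_{\mb D}(C,C)\to\Hom_{\mb D}(B,C)\to\Hom_{\mb D}(A,C)\to\cdots$, and matching each needed fact to the stated vanishing. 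Everything else is formal diagram chasing in a triangulated category.
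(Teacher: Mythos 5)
Your proof is correct and is essentially the paper's own argument: the paper applies $\Hom_{\mb D}(A,-)$ and $\Hom_{\mb D}(-,C)$ to the triangle and observes that the two vanishing hypotheses make $i\circ -\colon \Hom_{\mb D}(A,A)\to\Hom_{\mb D}(A,B)$ and $-\circ p\colon \Hom_{\mb D}(C,C)\to\Hom_{\mb D}(B,C)$ isomorphisms, which is exactly your existence-plus-uniqueness reading of the same two long exact sequences, and part (2) likewise follows from uniqueness. The only nitpick is a misattribution in your prose on the $C$-side: the existence of $\varphi_{C}$ uses $\Hom^{0}_{\mb D}(A,C)=0$ (so that $p\cdot\varphi_{B}\cdot i=0$ and $p\cdot\varphi_{B}$ lies in the kernel of restriction along $i$), while $\Hom^{-1}_{\mb D}(A,C)\cong\Hom_{\mb D}(A[1],C)=0$ gives only the uniqueness --- but the explicit long exact sequences you write at the end resolve this bookkeeping correctly.
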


\begin{proof}
We obtain the diagram of exact sequences of $R$-modules: 
\[
\xymatrix{
	&		&	\Hom_{\mb D}(A,A)	\ar[d]^{\alpha}	\\	
	&	\Hom_{\mb D}(B,B)\ar[r]^{i^{*}}\ar[d]_{p_{*}}	&	
			\Hom_{\mb D}(A,B)\ar[d]	\\
			\Hom_{\mb D}(C,C)\ar[r]	^{\gamma} &	
				\Hom_{\mb D}(B,C)\ar[r]	&	\Hom_{\mb D}(A,C)
}
\]
The assumptions imply both $\alpha$ and $\gamma$ are isomorphisms. 
Then $\varphi_{A}$ and $\varphi_{C}$ are given by 
\[
\varphi_{A}=\alpha^{-1}(i_{*} \varphi)
\text{ and }
\varphi_{C}= \gamma^{-1}(p_{*}\varphi). 
\]
The second assertion is obvious from the above diagram. 
\end{proof}

\begin{prop}
Let $\mb D$ be an $R$-linear triangulated category. 
Suppose that a distinguished triangle 
$A \overset{i}{\to} B \overset{p}{\to} C$ in $\mb D$ satisfies 
$\Hom_{\mb D}^{0}(A,C)=\Hom_{\mb D}^{-1}(A,C)=0$.

Then the following holds: 
\begin{equation}
\Supp _{R}A \cup \Supp _{R}C \subset \Supp _{R}B
\end{equation}
\end{prop}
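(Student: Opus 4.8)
The plan is to use the previous lemma to produce, for every element $r \in R$, ring homomorphisms relating the action of $r$ on $\Hom_{\mb D}(B,B)$ to its action on $\Hom_{\mb D}(A,A)$ and $\Hom_{\mb D}(C,C)$. Concretely, writing $\mu_B\colon R\to\Hom_{\mb D}(B,B)$, $\mu_A$, $\mu_C$ for the structure maps of the three objects, I first observe that $\mu_B(r)=r\cdot\id_B$ satisfies the hypotheses of Lemma \ref{lem:sub-quotient-closed}(1), so it induces $(\mu_B(r))_A\colon A\to A$ and $(\mu_B(r))_C\colon C\to C$ with $i\cdot(\mu_B(r))_A = (r\cdot\id_B)\cdot i = i\cdot(r\cdot\id_A)$ and similarly on the $C$ side. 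Since $i_* = \alpha$ in the notation of the lemma's proof is injective (indeed an isomorphism), the first identity forces $(\mu_B(r))_A = r\cdot\id_A = \mu_A(r)$; dually $(\mu_B(r))_C = \mu_C(r)$. The point of uniqueness in the lemma is exactly what makes $r\mapsto (\mu_B(r))_A$ and $r\mapsto(\mu_B(r))_C$ behave compatibly.

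Next I would promote this to the module level. The assignment $\varphi\mapsto\varphi_A$ of Lemma \ref{lem:sub-quotient-closed}(1) is $R$-linear (it is built from $\alpha^{-1}\circ i_*$, a composite of $R$-module maps) and, by part (2), kills $\varphi=0$; more importantly, restricting $\varphi$ to the image of $\mu_B$ shows that $\Hom_{\mb D}(B,B)$, viewed as an $R$-algebra via $\mu_B$, maps onto the $R$-subalgebras generated by $\mu_A$ and $\mu_C$ inside $\Hom_{\mb D}(A,A)$ and $\Hom_{\mb D}(C,C)$ respectively. What I actually need is the comparison of annihilators: if $r\in R$ annihilates $\Hom_{\mb D}(B,B)$, i.e. $r\cdot\id_B = 0$, then applying $(-)_A$ gives $r\cdot\id_A = 0$, so $r$ annihilates $\Hom_{\mb D}(A,A)$; likewise for $C$. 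Hence $\ann{\Hom_{\mb D}(B,B)}\subset\ann{\Hom_{\mb D}(A,A)}$ and $\ann{\Hom_{\mb D}(B,B)}\subset\ann{\Hom_{\mb D}(C,C)}$.

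Finally I translate this back into supports. For any $R$-module $N$, a prime $\mf p$ lies in $\Supp N$ iff $N_{\mf p}\neq 0$; and $\id_M$ localizes to $\id_{M_{\mf p}}$ in the localized category, whose endomorphism ring is $\Hom_{\mb D}(M,M)_{\mf p}$ up to the usual identification. Using $\ann{-}\subset\ann{-}$ and the fact that $\Supp N = V(\ann N)$ when $N$ is finitely generated — or, more robustly, arguing directly that if $\Hom_{\mb D}(B,B)_{\mf p}=0$ then $\id_B$ dies after localization, hence so does $\id_A$ via the induced map, hence $\Hom_{\mb D}(A,A)_{\mf p}=0$ — I conclude $\Supp_R A\subset\Supp_R B$ and $\Supp_R C\subset\Supp_R B$, which is the claim. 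The main obstacle I anticipate is the last step: $\Hom_{\mb D}(M,M)$ need not be a finitely generated $R$-module, so $\Supp$ is not simply $V(\ann)$, and I must phrase the localization argument at the level of the identity endomorphism rather than through annihilator ideals. The clean way around this is to note that $\Hom_{\mb D}(M,M)_{\mf p}=0$ iff $1/1 = \id_{M}/1 = 0$ in the localization (since the localized ring is unital, being a quotient of $\Hom_{\mb D}(M,M)[\text{denominators}]$), so the whole argument runs purely through the behaviour of identity morphisms under localization and the induced-morphism construction of the lemma.
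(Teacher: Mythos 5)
Your proposal is correct and takes essentially the same route as the paper's own proof: for a prime $\mf p \notin \Supp_R B$ one picks $r \in R \setminus \mf p$ with $r\cdot \id_B = 0$, uses Lemma \ref{lem:sub-quotient-closed} (the isomorphisms $\alpha$, $\gamma$, i.e.\ the uniqueness of the induced endomorphisms) to see that the induced maps are $r\cdot\id_A$ and $r\cdot\id_C$ and must vanish, and concludes that $\End{A}\otimes_R R_{\mf p}$ and $\End{C}\otimes_R R_{\mf p}$ are zero. Your closing caution about not using $\Supp = V(\mathrm{ann})$ for possibly non--finitely generated modules, and instead arguing through the identity morphism under localization, is exactly the right refinement and is how the paper's argument in fact runs.
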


\begin{proof}
Suppose that $\mf p \not \in \Supp_{R}B$. 
It is enough to show that $\mf p \not \in \Supp_{R}A$ and 
$\mf p \not \in \Supp_{R}C$. 
By the assumption $\mf p \not \in \Supp_{R}B$, 
there exists $r \in R -\mf p$ such that 
$r \cdot \id _{}=\mu_{r}=0$ in $\End{B}$. 
Since $\mb D$ is $R$-linear, 
the endomorphism $\mu_{r} \in \End{B}$ also induces 
the endomorphisms of $A$ and $C$ 
which make the following diagram commutative: 
\[
\xymatrix{
A	\ar[r]^{i}	\ar[d]_{\mu_{r}}	&	B	\ar[r]^{p}	\ar[d]_{\mu_{r}}	&	C	\ar[d]_{\mu_{r}}	\\
A	\ar[r]_{i}			&	B	\ar[r]_{p}			&	C	
}
\] 
By Lemma \ref{lem:sub-quotient-closed}, 
both $\mu_{r}$ and $\mu_{r}$ are zero. 
Thus 
the localizations $\End{A}\otimes _{R}R_{\mf p}$ and 
$\End{C}\otimes _{R}R_{\mf p}$ are zero. 
\end{proof}

\begin{cor}\label{cor:support-cohomology}
Let $R$ be an $R$-linear triangulated category. 
The $i$th cohomology of $E \in \mb D$ with respect to 
a bounded $t$-structure $(\mb D^{\leq 0}, \mb D^{\geq 1})$ on 
$\mb D$ is denoted by $H^{i}(E)$. 
Then the following holds: 
\[
\Supp_{R} H^{i}(E) \subset \Supp _{R} E . 
\]
\end{cor}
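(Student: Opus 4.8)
The plan is to deduce the inclusion from the Proposition above, applied to the two successive truncation triangles that carve $H^{i}(E)$ out of $E$.

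First I would reduce to the case $i=0$. The translation functor of an $R$-linear triangulated category is $R$-linear, so for every object $E$ and every integer $i$ the assignment $\varphi\mapsto\varphi[i]$ is an isomorphism of $R$-modules $\Hom_{\mb D}(E,E)\cong\Hom_{\mb D}(E[i],E[i])$; hence $\Supp_{R}E[i]=\Supp_{R}E$. Since $H^{i}(E)=H^{0}(E[i])$ by the very definition of the cohomology functors of the $t$-structure, it suffices to show $\Supp_{R}H^{0}(E)\subset\Supp_{R}E$. From now on write $(\mb D^{\leq n},\mb D^{\geq n})$ for the shifts of the aisle and co-aisle of the given $t$-structure and $\tau^{\leq n}$, $\tau^{\geq n}$ for the associated truncation functors, so that $H^{0}=\tau^{\geq 0}\tau^{\leq 0}$.

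Next I would apply the Proposition to the truncation triangle
\[
\tau^{\leq 0}E \to E \to \tau^{\geq 1}E .
\]
Since $\tau^{\leq 0}E\in\mb D^{\leq 0}$ and $\tau^{\geq 1}E\in\mb D^{\geq 1}$, the orthogonality built into a $t$-structure gives $\Hom_{\mb D}^{0}(\tau^{\leq 0}E,\tau^{\geq 1}E)=0$, and as $(\tau^{\geq 1}E)[-1]\in\mb D^{\geq 2}\subset\mb D^{\geq 1}$ we likewise get $\Hom_{\mb D}^{-1}(\tau^{\leq 0}E,\tau^{\geq 1}E)=0$. Hence the Proposition yields $\Supp_{R}\tau^{\leq 0}E\subset\Supp_{R}E$. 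Then I would apply the Proposition once more, to the level-$0$ truncation triangle of $\tau^{\leq 0}E$,
\[
\tau^{\leq -1}E \to \tau^{\leq 0}E \to H^{0}(E) ,
\]
which uses the standard identities $\tau^{\leq -1}\tau^{\leq 0}=\tau^{\leq -1}$ and $\tau^{\geq 0}\tau^{\leq 0}=H^{0}$. Here $\tau^{\leq -1}E\in\mb D^{\leq -1}$, $H^{0}(E)\in\mb D^{\geq 0}$, and $H^{0}(E)[-1]\in\mb D^{\geq 1}$, so the same two Hom-groups vanish and the Proposition gives $\Supp_{R}H^{0}(E)\subset\Supp_{R}\tau^{\leq 0}E$. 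Combining this with the previous inclusion completes the argument.

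I do not anticipate a genuine obstacle. The only point requiring care is the verification that the hypotheses $\Hom_{\mb D}^{0}(A,C)=\Hom_{\mb D}^{-1}(A,C)=0$ of the Proposition hold for the two triangles above, and this is exactly the vanishing $\Hom_{\mb D}(\mb D^{\leq m},\mb D^{\geq m+1})=0$ that is part of the axioms of a $t$-structure. Note that boundedness of the $t$-structure is not actually used here; it only ensures that $H^{i}(E)=0$ for all but finitely many $i$.
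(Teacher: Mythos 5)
Your proof is correct, and the hypothesis-checking is exactly where the care is needed: for each of your two triangles the required vanishing $\Hom_{\mb D}^{0}(A,C)=\Hom_{\mb D}^{-1}(A,C)=0$ is precisely the orthogonality $\Hom_{\mb D}(\mb D^{\leq m},\mb D^{\geq m+1})=0$ of the $t$-structure, as you say. The paper organizes the argument differently: it inducts on the cohomological length $p-q$, using the filtration triangle $E^{p-1}\to E\to H^{p}(E)[-p]$ with $E^{p-1}\in \mb D^{\leq p-1}$ to strip off the top cohomology at each step (it cites Lemma \ref{lem:sub-quotient-closed}, though what is really being used is the Proposition's inclusion $\Supp_{R}A\cup\Supp_{R}C\subset\Supp_{R}B$, exactly the tool you invoke). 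Your version replaces the induction by a direct two-step truncation: after the harmless reduction to $i=0$ via shift-invariance of $\Supp_{R}$, you apply the Proposition to $\tau^{\leq 0}E\to E\to\tau^{\geq 1}E$ and then to $\tau^{\leq -1}E\to\tau^{\leq 0}E\to H^{0}(E)$. The ingredients are the same, but your route is shorter, avoids the implicit bookkeeping $H^{i}(E^{p-1})=H^{i}(E)$ for $i<p$ that the induction relies on, and, as you correctly observe, does not use boundedness of the $t$-structure at all — the paper needs boundedness only so that $p-q$ is finite and the induction terminates.
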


\begin{proof}
Set $p$ and $q$ by 
\begin{align*}
p	&= \max\{	i \in \bb Z \mid H^{i}(E) \neq 0	\}, \text{ and }	\\
q	&=\min\{		i \in \bb Z \mid H^{i}(E) \neq 0	\}. 
\end{align*}
The proof is by induction on $p-q$. 
If $p-q=0$, then the assertion is clear. 

Taking the filtration with respect to the $t$-structure, 
we obtain the following triangle 
\[
\xymatrix{
E^{p-1}	\ar[r]	&	E	\ar[r]	&	H^{p}(E)[-p]. 
}
\]
where $E ^{p-1} \in \mb D^{\leq 0}[1-p]$. 
Lemma \ref{lem:sub-quotient-closed} implies 
$\Supp _{R}E^{p-1}\subset \Supp _{R}E$ and 
$\Supp_{R}H^{p}(E) \subset \Supp _{R}E$. 
Then the assumption of induction implies the desired assertion. 
\end{proof}

\begin{lem}\label{lem:extension-closed}
Let $\mb D$ be an $R$-linear triangulated category. 
Consider a diagram of distinguished triangle in $\mb D$: 
\[
\xymatrix{
A \ar[r]^{i}\ar[d]_-{\psi_{A}}	&	B	\ar[r]^{p}\ar[d]_-{	\psi_{B}}&	C	\ar[d]_-{\psi_{C}}	\\
A	\ar[r]^{i}		&	B	\ar[r]^{p}		&	C		.
}
\]
If 
$\psi _{A}=0$ and $\psi _{C}=0$ 
then the composite $\psi _{B}^{2}$ is zero. 
\end{lem}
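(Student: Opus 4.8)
The plan is to analyze what $\psi_A = 0$ and $\psi_C = 0$ force on $\psi_B$ by exploiting the long exact sequences attached to the distinguished triangle $A \overset{i}{\to} B \overset{p}{\to} C \to A[1]$, and then to compose two such constraints. First I would use $\psi_C = 0$: from commutativity of the right square, $p \cdot \psi_B = \psi_C \cdot p = 0$, so $\psi_B$ is killed by $p_*$. Applying $\Hom_{\mb D}(B,-)$ to the triangle gives the exact sequence
\[
\Hom_{\mb D}(B,A) \overset{i_*}{\to} \Hom_{\mb D}(B,B) \overset{p_*}{\to} \Hom_{\mb D}(B,C),
\]
so $\psi_B = i \cdot f$ for some $f \colon B \to A$. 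Second I would use $\psi_A = 0$: from commutativity of the left square, $\psi_B \cdot i = i \cdot \psi_A = 0$, so $\psi_B$ is killed by $i^*$. Applying $\Hom_{\mb D}(-,B)$ to the triangle gives the exact sequence
\[
\Hom_{\mb D}(C,B) \overset{p^*}{\to} \Hom_{\mb D}(B,B) \overset{i^*}{\to} \Hom_{\mb D}(A,B),
\]
so $\psi_B = g \cdot p$ for some $g \colon C \to B$.

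Now I would simply compose: $\psi_B^2 = (i \cdot f) \cdot (g \cdot p) = i \cdot (f \cdot g) \cdot p$. The middle factor $f \cdot g$ is a morphism $C \to A$, and its composite $i \cdot (f \cdot g) \cdot p$ factors through $i$ precomposed with $p$; but $p \cdot i$ need not be zero, so I should instead exploit that $f \cdot g \colon C \to A$, precompose with $p$ and postcompose with $i$. The cleaner route is to observe $\psi_B^2 = \psi_B \cdot \psi_B = (g\cdot p)\cdot(i \cdot f) = g \cdot (p \cdot i) \cdot f$, and here $p \cdot i = 0$ is precisely the composability-to-zero of two consecutive maps in the distinguished triangle $A \overset{i}{\to} B \overset{p}{\to} C$. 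Hence $\psi_B^2 = g \cdot 0 \cdot f = 0$.

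The only subtle point — and the one I would be careful about — is the bookkeeping of which factorization to use on which side so that the vanishing composite $p \cdot i = 0$ appears in the middle; writing $\psi_B = i \cdot f$ (from the $\psi_C = 0$ side) and $\psi_B = g \cdot p$ (from the $\psi_A = 0$ side) and then forming $\psi_B^2 = (g \cdot p)(i \cdot f)$ does exactly this. No finiteness or $\Hom$-vanishing hypothesis beyond the triangle axioms is needed here, so this is genuinely just a two-step diagram chase; I do not anticipate a real obstacle.
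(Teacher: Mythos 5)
Your proof is correct and takes essentially the same route as the paper: the paper likewise uses $\psi_C=0$ and the exact sequence $\Hom_{\mb D}(B,A)\to\Hom_{\mb D}(B,B)\to\Hom_{\mb D}(B,C)$ to factor $\psi_B=i\cdot\varphi$ with $\varphi\colon B\to A$. The only difference is that the paper finishes directly from the left commutative square, computing $\psi_B^2=\psi_B\cdot i\cdot\varphi=i\cdot\psi_A\cdot\varphi=0$, so your second factorization $\psi_B=g\cdot p$ and the appeal to $p\cdot i=0$ are correct but a harmless extra step.
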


\begin{proof}
Since $p\cdot \psi _{B} = \psi _{C}\cdot p=0$, there exists a 
$\varphi \colon B \to A$ such that 
$i \cdot \varphi =\psi _{B}$. 
Thus we see \[
\psi _{B}^{2} = \psi _{B} \cdot  i \cdot  \varphi = i \cdot \psi _{A} \cdot \varphi =0. 
\]
\end{proof}

\begin{lem}\label{lem:BinAorC}
Let 
$A \overset{i}{\to} B \overset{p}{\to}C$
be a distinguished triangle in an $R$-linear triangulated 
category $\mb D$. 
Then the following holds: 
\[
\Supp _{R}B \subset \Supp _{R}A \cup \Supp _{R}C. 
\]
\end{lem}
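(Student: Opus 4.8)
The plan is to prove the contrapositive pointwise on $\Spec R$: fix a prime $\mf p$ with $\mf p \notin \Supp_{R}A \cup \Supp_{R}C$ and show $\mf p \notin \Supp_{R}B$. Unwinding the definition of the support of a module, the hypothesis supplies elements $r, s \in R \setminus \mf p$ with $r\cdot\id_{A}=0$ in $\End{A}$ and $s\cdot\id_{C}=0$ in $\End{C}$. The goal is then to manufacture from $r$ and $s$ a single element of $R \setminus \mf p$ that annihilates $\id_{B}$.

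First I would record how the multiplication maps interact with the triangle. Since $\mb D$ is $R$-linear, composition of morphisms is $R$-bilinear, so for every $t \in R$ the endomorphism $\mu_{t} = t\cdot\id_{B}$ satisfies $\mu_{t}\cdot i = i\cdot(t\cdot\id_{A})$ and $p\cdot\mu_{t} = (t\cdot\id_{C})\cdot p$ (both sides compute $t\cdot i$, resp. $t\cdot p$). Specializing to $t=r$ and $t=s$ gives $\mu_{r}\cdot i = 0$ and $p\cdot\mu_{s}=0$.

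Next I would invoke the long exact sequences of $\Hom_{\mb D}$-groups attached to the distinguished triangle $A\overset{i}{\to}B\overset{p}{\to}C$, exactly as in the proof of Lemma \ref{lem:sub-quotient-closed}. From $\mu_{r}\cdot i = 0$ and exactness of $\Hom_{\mb D}(-,B)$, the endomorphism $\mu_{r}$ factors as $\mu_{r}=g\cdot p$ for some $g\colon C\to B$; from $p\cdot\mu_{s}=0$ and exactness of $\Hom_{\mb D}(B,-)$, the endomorphism $\mu_{s}$ factors as $\mu_{s}=i\cdot h$ for some $h\colon B\to A$. Since $\mu\colon R\to\End{B}$ is a ring homomorphism and $p\cdot i = 0$ in a distinguished triangle,
\[
\mu_{rs} = \mu_{r}\cdot\mu_{s} = g\cdot p\cdot i\cdot h = 0 .
\]
Consequently $rs$ annihilates all of $\End{B}$ (compose with an arbitrary endomorphism), and as $\mf p$ is prime with $r,s\notin\mf p$ we have $rs\notin\mf p$; hence $\End{B}\otimes_{R}R_{\mf p}=0$, i.e.\ $\mf p\notin\Supp_{R}B$.

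The argument is essentially formal, and I do not expect a genuine obstacle; the one point to get right is that both annihilators must be used simultaneously. A single relation $\mu_{r}\cdot i=0$ only yields $\mu_{r}=g\cdot p$, which need not be nilpotent because $p\cdot g$ need not vanish; it is the symmetric use of $s$ (killing $\id_{C}$, so that $\mu_{s}$ factors through $i$) that forces the product $\mu_{rs}$ to run through the zero composite $p\cdot i$. Note also that, in contrast with the Proposition above, no vanishing of $\Hom_{\mb D}^{0}(A,C)$ or $\Hom_{\mb D}^{-1}(A,C)$ is needed here.
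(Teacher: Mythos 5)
Your proof is correct, and it follows the same overall strategy as the paper (pass to a prime $\mf p$ outside $\Supp_{R}A\cup\Supp_{R}C$, produce annihilating elements, and use the long exact $\Hom$-sequences of the triangle), but the final step is organized differently. The paper first replaces your two elements by the single product $r_{0}=r_{A}r_{C}$, which kills both $\id_{A}$ and $\id_{C}$, and then appeals to Lemma \ref{lem:extension-closed}: there only one factorization is used (from $p\cdot\mu_{r_0}=0$ one writes $\mu_{r_0}=i\cdot\varphi$) together with the commutation $\mu_{r_0}\cdot i=i\cdot\mu_{r_0}$, and the conclusion is that the \emph{square} $\mu_{r_0}^{2}=\mu_{r_0^{2}}$ vanishes on $B$; since $r_{0}^{2}\notin\mf p$ this still forces $\mf p\notin\Supp_{R}B$. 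You instead use both exact sequences symmetrically, factoring $\mu_{r}$ through $p$ and $\mu_{s}$ through $i$, so that $\mu_{rs}=g\cdot p\cdot i\cdot h=0$ outright, with no squaring and no intermediate lemma; this is marginally more economical and yields the slightly sharper statement that $rs$ itself annihilates $\End{B}$. What the paper's packaging buys is the standalone Lemma \ref{lem:extension-closed}, which it reuses later (for the nilpotency statement feeding into Lemma \ref{lem:subset-affine}). Your closing remark is also accurate: unlike the reverse inclusion, this direction needs no vanishing of $\Hom^{0}_{\mb D}(A,C)$ or $\Hom^{-1}_{\mb D}(A,C)$.
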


\begin{proof}
Take a prime ideal $\mf p$ such that $\mf p \not \in \Supp_{R}A$ and 
$\mf p \not \in \Supp_{R}C$. 
Then there exists $r_{A} $ (resp. $r_{C}$) in $R -\mf p$ such that 
$r_{A} \cdot \id _{A}=0$ (resp. $r_{C} \cdot \id _{C}=0$). 
Then $r_{0}=r_{A}r_{C}$ satisfies $r_{0}\id_{A}=0$ and 
$r_{0}\id_{C}=0$. 
Since the category $\mb D$ is $R$-linear, 
the following diagram commutes: 
\[
\xymatrix{
A	\ar[r]^{i}	\ar[d]_{\mu_{r_{0}}}	&	B	\ar[r]^{p}	\ar[d]_{\mu_{r_{0}}}	&	C	\ar[d]_{\mu_{r_{0}}}	\\
A	\ar[r]_{i}				&	B	\ar[r]_{p}			&	C. 
}
\]
By Lemma \ref{lem:extension-closed}, 
we see $\mu_{r_{0} }^{2}= \mu_{r_{0}^{2}} \colon B \to B$ 
is the zero morphism. 
Hence $\id _{B} \in \End{B}$ is zero via localization to $\mf p \in \Spec R$. 
Thus we see $\mf p \not \in \Supp _{R}B= \Supp{\End{B}}$. 
\end{proof}

\begin{cor}\label{cor:supports-cohomology}
Let $\mb D$ be an $R$-linear triangulated category. 
The $i$th cohomology of $E \in \mb D$ with respect to a 
bounded $t$-structure on $\mb D$ is denoted by $H^{i}(E)$. 
The the following holds: 
\[
\Supp_{R}E	=	\bigcup_{i \in \bb Z}	\Supp_{R}H^{i}(E). 
\]
\end{cor}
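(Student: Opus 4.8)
The plan is to deduce Corollary \ref{cor:supports-cohomology} by combining the two inclusions we have already established: Corollary \ref{cor:support-cohomology} gives $\Supp_{R} H^{i}(E) \subset \Supp_{R} E$ for every $i$, hence $\bigcup_{i} \Supp_{R} H^{i}(E) \subset \Supp_{R} E$; it remains to prove the reverse inclusion $\Supp_{R} E \subset \bigcup_{i} \Supp_{R} H^{i}(E)$. For this I would argue by induction on the length $p-q$ of the cohomological amplitude of $E$, where $p = \max\{ i \mid H^{i}(E) \neq 0\}$ and $q = \min\{ i \mid H^{i}(E) \neq 0\}$, exactly as in the proof of Corollary \ref{cor:support-cohomology}. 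The base case $p - q = 0$ (or $E = 0$) is immediate since then $E \cong H^{p}(E)[-p]$ up to shift, so the two sides literally agree.

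For the inductive step, I would take the truncation triangle
\[
\xymatrix{
E^{p-1} \ar[r] & E \ar[r] & H^{p}(E)[-p]
}
\]
with $E^{p-1} \in \mb D^{\leq 0}[1-p]$ having cohomology concentrated in degrees $q, \dots, p-1$. Applying Lemma \ref{lem:BinAorC} to this triangle yields
\[
\Supp_{R} E \subset \Supp_{R} E^{p-1} \cup \Supp_{R} H^{p}(E)[-p] = \Supp_{R} E^{p-1} \cup \Supp_{R} H^{p}(E),
\]
where the last equality uses that the endomorphism ring is unchanged under shift, so the support is shift-invariant. By the inductive hypothesis applied to $E^{p-1}$, whose amplitude is $p - 1 - q < p - q$, we get $\Supp_{R} E^{p-1} = \bigcup_{i=q}^{p-1} \Supp_{R} H^{i}(E^{p-1}) = \bigcup_{i=q}^{p-1} \Supp_{R} H^{i}(E)$, since truncation does not alter the cohomology objects in the relevant range. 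Substituting gives $\Supp_{R} E \subset \bigcup_{i \in \bb Z} \Supp_{R} H^{i}(E)$, which closes the induction and, together with the already-proven containment, yields the equality.

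The only mildly delicate point — and I expect it to be the main obstacle to a fully rigorous write-up rather than a genuine difficulty — is making sure Lemma \ref{lem:BinAorC} is genuinely applicable here: that lemma requires no vanishing hypothesis on Hom-groups, unlike the Proposition preceding Corollary \ref{cor:support-cohomology}, so there is in fact nothing to check, and the argument goes through for an arbitrary bounded $t$-structure on an arbitrary $R$-linear triangulated category. One should also note explicitly that $\Supp_{R}$ is invariant under the shift functor, which is clear because $\Hom_{\mb D}(M[n], M[n]) \cong \Hom_{\mb D}(M,M)$ as $R$-modules compatibly with the $R$-action; this is used to identify $\Supp_{R} H^{p}(E)[-p]$ with $\Supp_{R} H^{p}(E)$.
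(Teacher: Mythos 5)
Your proposal is correct and follows essentially the same route as the paper: one inclusion from Corollary \ref{cor:support-cohomology}, and for the reverse, induction on the cohomological amplitude $p-q$ using the truncation triangle $E^{p-1}\to E\to H^{p}(E)[-p]$ together with Lemma \ref{lem:BinAorC}. Your explicit remarks about shift-invariance of $\Supp_{R}$ and the absence of any Hom-vanishing hypothesis in Lemma \ref{lem:BinAorC} are points the paper leaves implicit, but they do not change the argument.
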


\begin{proof}
By Corollary \ref{cor:support-cohomology}, it is enough to show 
$\Supp _{R}E \subset \bigcup_{i \in \bb Z}	\Supp_{R}H^{i}(E)$. 
Set $p$ and $q$ by 
\begin{align*}
p	&= \max\{	i \in \bb Z \mid H^{i}(E) \neq 0	\}, \text{ and }	\\
q	&=\min\{		i \in \bb Z \mid H^{i}(E) \neq 0	\}. 
\end{align*}
The proof is by the induction on $p-q$. 
Similarly to the proof of Corollary \ref{cor:support-cohomology}, 
we have the distinguished triangle 
\[
\xymatrix{
E^{p-1}	\ar[r]	&	E	\ar[r]	&	H^{p}(E)[-p]. 
}
\]
Lemma \ref{lem:BinAorC} implies 
\[
\Supp _{R}E  \subset \Supp _{R} E^{p-1}  \cup \Supp_{R} H^{p}(E). 
\]
Then the assumption of the induction 
implies 
\[
\Supp _{R}E^{p-1} =\bigcup_{i\in \bb Z}H^{i}(E^{p-1})
\]
which completes the proof. 
\end{proof}

In the last of this section, 
we show that the generalized support coincides with 
``usual supports'' of complexes of $R$-modules. 
So let us suppose a triangulated category is 
the unbounded derived category $\mb D(\mr{Mod}\, R)$ of 
(not necessarily finite) $R$-modules. 
Recall that the support of an object $E$ in $\mb D(\mr{Mod}\, R)$ 
is the union of the supports of the $i$th cohomology $H^{i}(E)$: 
\[
\Supp E := \bigcup_{i\in \bb Z} \Supp H^{i}(E). 
\]

\begin{lem}\label{lem:subset-affine}
Let $R$ be commutative ring and let $E$ be a complex of $R$-modules. 

\begin{enumerate}
\item We have $\Supp E \subset \Supp _{R}E$. 
\item If the complex $E$ is bounded, then $\Supp E = \Supp _{R}E$. 
\end{enumerate}
\end{lem}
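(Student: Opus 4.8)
The plan is to treat (1) directly via the ring map $\mu\colon R \to \Hom_{\mb D}(E,E)$ and to reduce (2) to the bounded-above/bounded-below case by dévissage along the cohomology of $E$, using Corollary \ref{cor:supports-cohomology} together with the single-module case. For (1): fix a prime $\mf p \notin \Supp_R E = \Supp \Hom_{\mb D}(E,E)$, so there is $r \in R \setminus \mf p$ with $r\cdot \id_E = 0$ in $\End{E}$. Since the action of $r$ on $E$ is realized (up to homotopy) by multiplication by $r$ on the complex, it induces multiplication by $r$ on each $H^i(E)$, and $r \cdot \id_E = 0$ in $\End{E}$ forces $r$ to annihilate $H^i(E)$ after localizing at $\mf p$; hence $\mf p \notin \Supp H^i(E)$ for all $i$, i.e. $\mf p \notin \Supp E$. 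This gives $\Supp E \subset \Supp_R E$.

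For (2), assume $E$ is bounded. By Corollary \ref{cor:supports-cohomology} applied to the standard $t$-structure on $\mb D(\mr{Mod}\,R)$, we have $\Supp_R E = \bigcup_{i\in\bb Z} \Supp_R H^i(E)$, a finite union since $E$ is bounded. So it suffices to prove $\Supp_R M = \Supp M$ for a single $R$-module $M$, regarded as a complex concentrated in degree $0$. Here $\Hom_{\mb D}(M,M) = \Hom_R(M,M) = \End_R(M)$, so I must show $\Supp \End_R(M) = \Supp M$. The inclusion $\Supp \End_R(M) \subset \Supp M$ is immediate because $\End_R(M)$ is a module over (a quotient of) $R$ through its action on $M$: if $\mf p \notin \Supp M$ then $M_{\mf p} = 0$, hence $\End_R(M)_{\mf p}$ is a ring in which $1 = 0$, so it vanishes. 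Conversely, if $\mf p \in \Supp M$ then $M_{\mf p} \neq 0$, so $\id_{M_{\mf p}} \neq 0$ in $\End_{R_{\mf p}}(M_{\mf p}) = \End_R(M)_{\mf p}$ (localization commutes with $\Hom$ out of modules that are finitely... — here I only need $\id$ to be nonzero, which holds as soon as $M_{\mf p}\neq 0$ with no finiteness needed), hence $\End_R(M)_{\mf p} \neq 0$ and $\mf p \in \Supp_R M$. Combining, $\Supp_R M = \Supp M$, and taking the finite union over $i$ completes (2).

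The step I expect to need the most care is the identification $\Hom_R(M,M)_{\mf p} \cong \Hom_{R_{\mf p}}(M_{\mf p}, M_{\mf p})$ implicit above: in full generality localization does not commute with infinite $\Hom$, so I should phrase the converse inclusion so as to avoid that isomorphism. It is enough to observe that the localization map $\End_R(M) \to \End_{R_{\mf p}}(M_{\mf p})$ sends $\id_M$ to $\id_{M_{\mf p}} \neq 0$, which already shows $\End_R(M)_{\mf p}\neq 0$; no finiteness hypothesis on $M$ is required. Similarly, in part (1) the only subtlety is that the $R$-action on $E$ in $\mb D(\mr{Mod}\,R)$ is compatible with the $R$-action on cohomology, which is standard since $\mu_r$ is represented by the honest chain map ``multiplication by $r$''. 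Everything else is bookkeeping with supports and the already-established Corollary \ref{cor:supports-cohomology}.
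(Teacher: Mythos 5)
Your part (1) is essentially the paper's argument and is fine. For part (2) you take a genuinely different route: the paper fixes $\mf p\notin\Supp E$, chooses a single $s\in R\setminus\mf p$ annihilating every $H^{i}(E)$, and then shows via an auxiliary nilpotency lemma (induction on truncations, using Lemma \ref{lem:extension-closed}) that $\mu_{s^{N}}=0$ in $\End{E}$; you instead invoke Corollary \ref{cor:supports-cohomology} to get $\Supp_{R}E=\bigcup_{i}\Supp_{R}H^{i}(E)$ and reduce to the identity $\Supp\End{M}=\Supp M$ for a single module $M$. The reduction itself is legitimate (apply the corollary inside $\mb D^{b}(\mr{Mod}\,R)$, where the standard $t$-structure is bounded, and note that $\End{E}$ is unchanged). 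The gap is in the module-level claim: the inclusion $\Supp\End{M}\subset\Supp M$ is false for arbitrary $M$, and your justification --- ``$M_{\mf p}=0$, hence $\End{M}_{\mf p}$ is a ring in which $1=0$'' --- does not go through. To have $1=0$ in $\End{M}\otimes_{R}R_{\mf p}$ you need some $r\notin\mf p$ with $rM=0$, i.e.\ $\ann{M}\not\subset\mf p$; from $M_{\mf p}=0$ this follows only when $\Supp M=V(\ann{M})$, which is exactly where finite generation of $M$ enters. (Your remark that ``no finiteness hypothesis on $M$ is required'' is correct only for the other inclusion, the one via $\id_{M}\mapsto\id_{M_{\mf p}}$.)

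Concretely, take $R=\bb Z$ and $M=\bigoplus_{n\geq 1}\bb Z/2^{n}\bb Z$: then $\Supp M=\{(2)\}$, but $\id_{M}$ has zero annihilator, so $\bb Z\cdot\id_{M}\cong\bb Z$ sits inside $\End{M}$ and $\End{M}\otimes_{R}R_{\mf p}\neq 0$ for every prime $\mf p$, i.e.\ $\Supp_{R}M=\Spec\bb Z$. This shows not only that your key claim fails in the stated generality, but that part (2) of the lemma itself needs finitely generated cohomology; the paper's own proof carries the same tacit hypothesis at the step where it picks $r_{i}\in R\setminus\mf p$ with $r_{i}H^{i}(E)=0$, which requires $\mf p\notin V(\ann{H^{i}(E)})$ rather than merely $H^{i}(E)_{\mf p}=0$. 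Once each $H^{i}(E)$ is assumed finitely generated --- as it is in the intended applications --- your reduction via Corollary \ref{cor:supports-cohomology} becomes a correct and arguably cleaner alternative to the paper's route, since it bypasses the nilpotency lemma; but as written, the decisive step is unproved and, in the generality you insist on, false.
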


\begin{proof}
Suppose $\mf p \not\in \Supp _{R}E$. 
Then there exists $r \in R -\mf p$ such that 
$r \id = \mu_{r}$ is zero in $\Hom_{}(E, E)$. 
Taking cohomology with respect to the standard $t$-structure, 
$\mu_{r}$ does imply the multiplication $\mu_{r}$ on $H^{i}(E)$. 
Since $\mu_{r} \in \End E$ is zero, 
so does $\mu_{r} \in \End {H^{i}(E)}$ for any $i \in \bb Z$. 
Hence $\mf p $ is not in $\Supp H^{i}(E)$ for any $i \in \bb Z$. 
Thus we have 
\[
\Supp E =\bigcup _{i \in \bb Z} \Supp H^{i}(E)  \subset \Supp _{R}E. 
\]

Suppose that $E$ is bounded and 
take a $\mf p \in \Spec\, R - \bigcup _{i \in \bb Z} \Supp H^{i}(E)$. 
Then 
there exists $ r_{i} \in R - \mf p$ such that 
$\mu_{r_{i}} \colon H^{i}(E) \to H^{i}(E)$ is zero 
for each $i$ with $H^{i}(E)\neq 0$. 
Since $E$ is bounded, let $s$ be the product of such $r_{i}$.

Then the morphism $\mu_{s} \in \End{H^{i}(E)}$ is zero 
for any $i \in \bb Z$.  
By the lemma below, 
there exists $N \in \bb N$ such that 
the $N$th composite $\mu_{s}^{N}=\mu_{s^{N}}$ is 
the zero morphism of $E$. 
Hence we see $\mf p \not \in \Supp _{R}E$. 
\end{proof}

\begin{lem}
Let $E$ be a bounded complex of $R$-modules. 
For an $r\in R$, if $\mu_{r} \in \End{H^{i}(E)}$ is zero 
for any $i \in \bb Z$, 
then there exists $N \in \bb N$ such that 
the composite $\mu_{s}^{N}=\mu_{s^{N}}$ is zero in $\End{E}$. 
\end{lem}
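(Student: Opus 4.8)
The plan is to run an induction along the canonical filtration of $E$ for the standard $t$-structure on $\mb D(\mr{Mod}\,R)$, reducing the statement to a quantitative refinement of Lemma~\ref{lem:extension-closed}. (One cannot hope to take $N=1$ in general: an endomorphism of a complex that vanishes on every cohomology group need not be zero in the derived category, so passing to a power is genuinely necessary. We write $r$ for the ring element, denoted $s$ in the statement.) The refinement I would first prove is: in a morphism of distinguished triangles with vertical arrows $\psi_{A},\psi_{B},\psi_{C}$ over $A\overset{i}{\to}B\overset{p}{\to}C$, if $\psi_{C}=0$ and $\psi_{A}^{\,k}=0$ for some $k\ge1$, then $\psi_{B}^{\,k+1}=0$. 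The proof copies the argument of Lemma~\ref{lem:extension-closed}: from $p\cdot\psi_{B}=\psi_{C}\cdot p=0$ one gets $\varphi\colon B\to A$ with $\psi_{B}=i\cdot\varphi$, and an induction on $k$ using $\psi_{B}\cdot i=i\cdot\psi_{A}$ gives $\psi_{B}^{\,k}\cdot i=i\cdot\psi_{A}^{\,k}=0$, whence $\psi_{B}^{\,k+1}=\psi_{B}^{\,k}\cdot i\cdot\varphi=0$; Lemma~\ref{lem:extension-closed} is the case $k=1$.

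Next I would choose integers $a\le b$ with $H^{i}(E)=0$ for $i\notin[a,b]$, which exist since $E$ is bounded, and set $E_{n}:=\tau_{\le n}E$. The canonical truncations give distinguished triangles
\[
E_{i-1}\longrightarrow E_{i}\longrightarrow H^{i}(E)[-i]\longrightarrow E_{i-1}[1]\qquad(a<i\le b)
\]
together with identifications $E_{a}\simeq H^{a}(E)[-a]$ and $E_{b}\simeq E$ in $\mb D(\mr{Mod}\,R)$, the latter because $E_{b}\to E$ is a quasi-isomorphism. Since $\mb D(\mr{Mod}\,R)$ is $R$-linear and the functors $\tau_{\le n}$ are $R$-linear, applying $\tau_{\le n}$ to $\mu_{r}=r\cdot\id_{E}$ gives $\mu_{r}=r\cdot\id_{E_{n}}$ on each $E_{n}$; being multiplication by a ring element, $\mu_{r}$ is automatically a morphism of each triangle above, and the endomorphism it induces on the graded piece $H^{i}(E)[-i]$ is multiplication by $r$ on $H^{i}(E)$, which vanishes by hypothesis.

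Finally I would prove $\mu_{r}^{\,n-a+1}=0$ in $\Hom_{\mb D}(E_{n},E_{n})$ by induction on $n\in[a,b]$. For $n=a$ the complex $E_{a}$ has cohomology concentrated in degree $a$, so $\Hom_{\mb D}(E_{a},E_{a})\cong\Hom_{R}(H^{a}(E),H^{a}(E))$ and $\mu_{r}$, which corresponds to multiplication by $r$ on $H^{a}(E)$, is zero. For the inductive step one applies the refinement of the first paragraph to the triangle $E_{i-1}\to E_{i}\to H^{i}(E)[-i]$ with the endomorphisms $\mu_{r}$ and the hypothesis $\mu_{r}^{\,i-a}=0$ on $E_{i-1}$ (if $H^{i}(E)=0$ the first map is an isomorphism and there is nothing to do). Taking $n=b$ and using that $E_{b}\to E$ is a quasi-isomorphism compatible with $\mu_{r}$, one obtains $\mu_{r}^{\,b-a+1}=0$ in $\End{E}$, so $N=b-a+1$ works. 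The only step that needs real care is the refinement in the first paragraph, namely tracking how the exponent grows by one across each extension; everything else is routine once one observes that $\mu_{r}$ commutes with the truncation functors, which is exactly the $R$-linearity of $\tau_{\le n}$ combined with $\mu_{r}$ being scalar multiplication.
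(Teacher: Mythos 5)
Your proof is correct and follows essentially the same route as the paper: induction along the canonical truncation triangles $E_{i-1}\to E_{i}\to H^{i}(E)[-i]$, lifting $\mu_{r}$ through the triangle exactly as in Lemma~\ref{lem:extension-closed}. The only difference is quantitative and immaterial: the paper applies Lemma~\ref{lem:extension-closed} directly with $\psi_{A}=\psi_{C}=\mu_{s}^{N}$, doubling the exponent at each step, while your one-sided refinement ($\psi_{C}=0$, $\psi_{A}^{k}=0\Rightarrow\psi_{B}^{k+1}=0$) yields the linear bound $N=b-a+1$; since only the existence of some $N$ is needed, both arguments are fine.
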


\begin{proof}
Set 
$p:= \max \{	i \in \bb Z \mid H^{i}(E)\neq 0	\}$ and 
$q :=\{	i \in \bb Z	\mid H^{i}(E)\neq 0 \}$. 
The proof is by induction on $p-q$. 

If $p-q=0$, then $N$ is taken to be $1$. 
Suppose the assertion holds for $p-q=\ell-1$. 
Taking truncation of $E$, 
we have the following distinguished triangle: 
\[
\xymatrix{
E^{p-1}	\ar[r]	&	E	\ar[r]	&	H^{p}(E)[-p]. 
}
\]
By the assumption on induction, there exists 
$N \in \bb N$ such that $\mu_{s}^{N} \colon E^{p-1} \to E^{p-1}$ is zero. 
Since $\mu_{s}^{N} \colon H^{p}(E)\to H^{p}(E)$ is zero, 
Lemma \ref{lem:extension-closed} implies that  
the endomorphism $\mu_{s}^{2N} \colon E \to E$ is zero. 
\end{proof}

\section{Stability conditions and $R$-linear categories}

The aim in this section is to study a property of 
$\Supp _{R}E$ for $\sigma$-stable objects with 
respect to a stability condition $\sigma$. 
The following definition reflects properties of stable objects. 

\begin{dfn}
Let $\mb D$ be an $R$-linear triangulated category. 
An object $M \in \mb D$ satisfies the isomorphic property 
if the following holds: 
\begin{itemize}
\item[(Ism)] $\forall r \in R$, $\mu_{r} \colon M \to M$ is an 
isomorphism if $\mu_{r}$ is non-zero. 
\end{itemize}
\end{dfn}

For well behavior of $\Supp _{R}E$, we need a finiteness assumption on the triangulated category $\mb D$. 
\begin{dfn}
Let $\mb D$ be an $R$-linear triangulated category. 
$\mb D$ is said to be \textit{finite} over $R$ if 
the $R$-module $\Hom_{\mb D}(E, E)$ is finite for any object $E\in \mb D$. 
\end{dfn}

\begin{lem}\label{lem:vanishing-domain}
Assume that the commutative ring $R$ is an integral domain 
with $\dim R>0$ and an $R$-linear triangulated category 
$\mb D$ is finite. 
If an object $M$ in $\mb D$ satisfies 
\begin{itemize}
\item the morphism $\mu_{r} \colon M \to M$ is an 
isomorphism for any $r \in R \setminus\{	0	\}$, 
\end{itemize}
then $M$ is zero. 
\end{lem}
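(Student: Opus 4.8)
The plan is to prove that the hypothesis forces $\End{M}=0$; since in an additive category $\id_M=0$ means $M$ is a zero object, this gives $M=0$. The starting observation is that the element $\mu_r\in\End{M}$ acts on the $R$-module $\Hom_{\mb D}(M,M)$, by pre- or post-composition, as scalar multiplication by $r$ — this is immediate from the $R$-bilinearity of composition in an $R$-linear category (so in particular $\mu_r$ is central). Consequently, for $r\in R\setminus\{0\}$, if $\nu_r\colon M\to M$ is a two-sided inverse of $\mu_r$ in $\mb D$, then $\varphi\mapsto\nu_r\circ\varphi$ is a two-sided inverse of multiplication-by-$r$ on $\End{M}$. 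Hence multiplication by every nonzero element of $R$ is bijective on $\End{M}$; equivalently, localizing at $S=R\setminus\{0\}$, the $R$-module structure on $\End{M}$ extends uniquely to a structure of vector space over the fraction field $K$ of $R$.

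Next I would bring in the finiteness of $\mb D$. By assumption $\End{M}=\Hom_{\mb D}(M,M)$ is a finitely generated $R$-module, say generated by $x_1,\dots,x_n$. Since $\End{M}$ is a $K$-vector space we get $\End{M}=\sum_i Rx_i\subseteq\sum_i Kx_i\subseteq\End{M}$, so $\End{M}=\sum_i Kx_i$ and in particular $d:=\dim_K\End{M}\le n<\infty$; choosing a $K$-basis gives a $K$-linear (hence $R$-linear) isomorphism $\End{M}\cong K^{\oplus d}$. Now suppose, for contradiction, that $M\neq 0$. Then $\id_M\neq 0$, so $d\ge 1$, and projecting onto one coordinate exhibits $K$ as an $R$-module quotient of the finitely generated $R$-module $K^{\oplus d}\cong\End{M}$; therefore $K$ itself is finitely generated as an $R$-module.

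Finally I would invoke the elementary fact that the fraction field of an integral domain $R$ is finitely generated as an $R$-module only if $R=K$, i.e.\ only if $R$ is a field: writing finitely many $R$-module generators of $K$ over a common denominator $b\in R\setminus\{0\}$ yields $K=\tfrac{1}{b}R$, and then $\tfrac{1}{b^{2}}\in\tfrac{1}{b}R$ forces $b$ to be a unit, whence $K=R$. This contradicts $\dim R>0$. So $d=0$, $\End{M}=0$, and $M=0$. The only delicate point is the interplay of the two finiteness conditions: over a possibly non-Noetherian $R$ one may not argue that ``$\End{M}$ contains a copy of $K$, hence a non–finitely-generated submodule,'' so it is essential to use the $K$-vector space structure to split $\End{M}\cong K^{\oplus d}$ \emph{before} passing to a quotient; this is where the hypotheses ``$R$ an integral domain'' and ``$\mb D$ finite'' are both used, and everything else is routine.
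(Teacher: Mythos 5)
Your proof is correct, and it closes the argument by a somewhat different route than the paper. Both proofs begin identically: since $\mu_{r}$ is invertible in $\mb D$ for every $r\in R\setminus\{0\}$, multiplication by every nonzero $r$ is bijective on the finitely generated $R$-module $\End{M}$, and everything reduces to showing that such a module must vanish when $R$ is a domain with $\dim R>0$. The paper finishes inside its support formalism: if $\End{M}\neq 0$ then $r\mapsto\mu_{r}$ embeds $R$ into $\End{M}$, so $\Supp_{R}M=\Spec R$; on the other hand, choosing a nonzero non-unit $r$ (which exists because $\dim R>0$) the bijectivity gives $\End{M}\otimes_{R}R/(r)=0$, and finite generation (Nakayama) then forces $\Supp_{R}M\cap\Spec R/(r)=\emptyset$, contradicting the full support. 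You instead invert all of $R\setminus\{0\}$ at once: $\End{M}$ becomes a finite-dimensional vector space over the fraction field $K$, and if it were nonzero a coordinate projection would exhibit $K$ as a finitely generated $R$-module, which forces $R=K$ to be a field, contradicting $\dim R>0$. The hypotheses (domain, $\dim R>0$, finiteness of $\mb D$) enter at exactly the same points in both arguments; your version trades the support/Nakayama step for the classical fact about fraction fields, making it slightly more elementary and self-contained (and your care in passing to a quotient of $\End{M}$ rather than a submodule, so as not to need any Noetherian hypothesis, is a sound precaution), whereas the paper's phrasing keeps the lemma aligned with the support-theoretic language used throughout the rest of the paper.
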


\begin{proof}
It is enough to show $\Hom_{\mb D}(M,M)=0$, 
since the category $\mb D$ is additive. 

If $\Hom_{\mb D}(M,M)$ is non-zero, then we have $\Supp_{R} M=\Spec R$ 
since $\Hom_{\mb D}(M,M)$ contains $R$. 
On the other hand, there exists a non-unit $r$ in $R$. 
Then the assumption implies that $\mu_{r}$ gives an isomorphism of $M$. 
Hence we have the following isomorphism 
\[
\mu_{r}^{*} \colon \Hom_{\mb D}(M,M) \to \Hom_{\mb D}(M,M). 
\]
Hence the $R$-module $\Hom_{\mb D}(M,M)$ satisfies 
$\Hom_{\mb D}(M,M) \otimes R/(r)=0$ which implies 
$\Supp _{R}M \cap \Spec R/(r) = \emptyset$ since 
$\Hom_{\mb D}(M,M)$ is finitely generated. 
This contradicts the fact $\Supp_{R}M=\Spec R$. 
\end{proof}

\begin{lem}\label{lem:dim=0}
Suppose an object $M$ in a finite $R$-linear triangulated category 
$\mb D$ satisfies the condition (Ism). 
Then the following holds: 
\begin{enumerate}
\item The ideal $\ann{\End{M}}$ of $R$ is prime. 
\item If $M$ is non-zero, then $\ann{\End{M}} = \ann{f}$ 
for any $f \in \End{M} \setminus \{ 0 \}$. 
\item If $M$ is non-zero, then $\dim \Supp_{R}(M)=0$. 
\end{enumerate}
\end{lem}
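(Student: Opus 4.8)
The plan is to set $A := \End{M}$, which is a finite $R$-algebra containing the image of $\mu\colon R \to A$, and to study the annihilator ideal $I := \ann{A}$ of $R$ through the structure map. Note that $r \in I$ exactly when $\mu_r = 0$ in $A$, and $\Supp_R M = \Supp A = V(I)$ since $A$ is a finitely generated $R$-module. So part (3) amounts to showing $\dim R/I = 0$, and part (1) to showing $R/I$ is a domain.

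\textbf{Part (1).} First I would show $I$ is prime. Suppose $rs \in I$, i.e.\ $\mu_{rs} = \mu_r \circ \mu_s = 0$ in $\End{M}$. If $r \notin I$, then $\mu_r \neq 0$, so by (Ism) the morphism $\mu_r$ is an isomorphism of $M$; composing $\mu_r \circ \mu_s = 0$ on the left with $\mu_r^{-1}$ gives $\mu_s = 0$, i.e.\ $s \in I$. Hence $I$ is prime (it is proper since $\mu_1 = \id_M \neq 0$ when $M \neq 0$; if $M = 0$ then $I = R$ and the statements about nonzero $M$ are vacuous, while "$I$ is prime" should be read with the usual convention, so I would just remark that for $M \neq 0$, $I$ is a genuine prime ideal).

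\textbf{Part (2).} For $M$ nonzero and $f \in \End{M} \setminus \{0\}$, clearly $I = \ann{A} \subseteq \ann{f}$. Conversely, if $r \in \ann{f}$, then $\mu_r \circ f = 0$; since $f \neq 0$ and $\mu_r$, were it nonzero, would be an isomorphism by (Ism) (hence injective on $\Hom$-groups after composition... more precisely $\mu_r$ invertible forces $f = \mu_r^{-1}\circ(\mu_r\circ f) = 0$, a contradiction), we get $\mu_r = 0$, i.e.\ $r \in I$. So $\ann{f} = I$.

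\textbf{Part (3).} Now I want $\dim R/I = 0$. Here is where Lemma \ref{lem:vanishing-domain} enters, and this is the step I expect to be the main obstacle — not because it is deep, but because it requires passing to the quotient domain correctly. Set $\bar R := R/I$, a Noetherian... (actually $R$ need not be Noetherian here, but $\bar R$ is an integral domain by part (1)). The category $\mb D$ is $R$-linear, and since $I$ kills $\End{M}$, the action factors through $\bar R$; moreover $M$ is then an object of the $\bar R$-linear category obtained by base change, and this category is still finite over $\bar R$ because $\Hom$-modules are unchanged as abelian groups and are finite over $R$ hence over the quotient $\bar R$. By construction $\bar\mu_r \colon M \to M$ is nonzero for every $r \in \bar R \setminus \{0\}$ (that is the meaning of $I = \ann{\End M}$ together with part (2)), so by (Ism) each such $\bar\mu_r$ is an isomorphism. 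If $\dim \bar R > 0$, Lemma \ref{lem:vanishing-domain} applied to $\bar R$ and $M$ forces $M = 0$, contradicting our hypothesis. Hence $\dim \bar R = 0$, i.e.\ $\dim \Supp_R M = \dim V(I) = \dim \bar R = 0$. The one point requiring care is the verification that the base-changed category is a legitimate $\bar R$-linear triangulated category that is finite over $\bar R$; I would spell this out by noting that $R$-linearity simply means the $\Hom$-sets are $R$-modules compatibly with composition, and all of this descends verbatim to $\bar R$ since $I$ acts as zero on every relevant $\Hom$-module once we know it acts as zero on $\End M$ — but in fact for the application we only need $\bar\mu_r$ to be an isomorphism of $M$, which follows directly from (Ism) in $\mb D$ without rebuilding the category, so a cleaner route is to observe: pick a non-unit $0 \neq \bar r \in \bar R$ (exists if $\dim \bar R > 0$), lift to $r \in R$; then $\mu_r \neq 0$ in $\End M$, so $\mu_r$ is an isomorphism, so $\mu_r^*$ is an automorphism of the finite $R$-module $\End M$, giving $\End M / (r) = 0$, hence $\Supp_R M \cap V(r) = \emptyset$; but $\Supp_R M = V(I)$ and $V(r) \cap V(I) = V(I + (r)) \neq \emptyset$ since $\bar r$ is a non-unit in $\bar R$ — contradiction. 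This last formulation essentially reproduces the proof of Lemma \ref{lem:vanishing-domain} in the quotient and is what I would write out.
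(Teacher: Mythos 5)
Your proposal is correct and follows essentially the same route as the paper: parts (1) and (2) are the paper's arguments verbatim (invertibility of a nonzero $\mu_r$ via (Ism) kills the other factor, resp.\ kills $f$), and part (3) is exactly the paper's reduction, namely passing to the domain $R/\ann{\End{M}}$ and invoking (or, in your cleaner variant, inlining the short proof of) Lemma \ref{lem:vanishing-domain}, using finiteness of $\End{M}$ to identify $\Supp_{R}M$ with $V(\ann{\End{M}})$. Your extra remark that one need not base-change the whole category, since only the action on $\End{M}$ matters, addresses a point the paper's proof leaves implicit, but it does not change the argument.
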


\begin{proof}
Let $a$ and $b$ be in $R$. 
Suppose the product $ab$ is in $\ann {\End{M}}$ and 
$a \not \in \ann{\End{M}}$. 
Then the endomorphism $\mu_{a}$ is non-zero and hence is invertible. 
Thus $\mu_{b}$ is zero by $\mu_{ab}=\mu_{a}\mu_{b}$. 
Hence $b \in \ann{\End{M}}$.

Take $a \in \ann{f} $. 
If $\mu_{a}$ is non-zero, 
then $\mu_{a}$ is an isomorphism by the assumption. 
This contradicts $a \in \ann{f}$. 
Thus $\mu_{a}$ is the zero-morphism and 
we have $\ann{f} \subset \ann{\End{M}}$. 
The opposite inclusion is clear.

Put $\mf p=\ann{\End{M}}$. 
$\End{M}$ is an $R /\mf p$-module and 
$M$ satisfies the condition (Ism). 
If the prime ideal $\mf p$ is not maximal, 
then $\dim R/\mf p>0$ and Lemma \ref{lem:vanishing-domain} implies $M=0$. 
Hence $\mf p$ has to be maximal and $\dim \Supp_{R}M=0$. 
\end{proof}

\begin{lem}\label{lem:stable-dim0}
Let $\mb D$ be a finite $R$-linear triangulated category and 
let $\sigma$ be a locally finite stability condition on $\mb D$. 
If an object $E \in \mb D$ is $\sigma$-stable then $\dim \Supp_{R}E=0$. 
\end{lem}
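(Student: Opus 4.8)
The plan is to combine Lemma~\ref{lem:dim=0} with the fact that $\sigma$-stable objects in a locally finite stability condition have endomorphism ring a division algebra, which in our $R$-linear setting will force condition (Ism). Concretely, recall that if $\sigma$ is locally finite and $E$ is $\sigma$-stable of phase $\phi$, then every nonzero endomorphism of $E$ is an isomorphism: the heart $\mca P(\phi)$ of the slicing at phase $\phi$ is an abelian category of finite length, and $E$ is a simple object there, so $\End{E}$ is a division ring by Schur's lemma. (If $\sigma$ is only locally finite this is exactly where local finiteness is used: $\mca P(\phi)$ being of finite length makes Schur's argument go through.)

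First I would invoke this Schur-type statement to conclude that for every $r \in R$ the endomorphism $\mu_r \colon E \to E$, being either zero or a unit in the division ring $\End{E}$, is either zero or an isomorphism. That is precisely condition (Ism). Hence $E$ satisfies the hypotheses of Lemma~\ref{lem:dim=0}. Now distinguish two cases. If $E = 0$, then $\Supp_R E = \Supp \End{E} = \Supp 0 = \emptyset$, which has dimension $-\infty$ (or one adopts the convention that the statement is vacuous for the zero object, though in most conventions a $\sigma$-stable object is nonzero by definition, so this case does not arise). If $E \neq 0$, then part (3) of Lemma~\ref{lem:dim=0} gives $\dim \Supp_R E = 0$ directly, and we are done.

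The main obstacle I anticipate is not in the $R$-linear bookkeeping — that is entirely handled once (Ism) is established — but in pinning down the precise form of the Schur's-lemma input and making sure the $R$-linear structure on $\End{E}$ is compatible with its being a division ring. One must check that the map $\mu \colon R \to \End{E}$ lands in a subring over which $\End{E}$ is naturally a module, which is automatic from $R$-linearity of $\mb D$, and that the division ring structure is the one induced by composition, so that "$\mu_r$ nonzero $\Rightarrow$ $\mu_r$ invertible" really does follow. A secondary point worth a sentence is why $E$ is \emph{simple} in $\mca P(\phi)$: this is because any proper nonzero subobject would be a semistable object of the same phase with strictly smaller mass, contradicting stability of $E$ — a standard fact about slicings that I would cite rather than reprove. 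Once these points are in place the proof is two lines: stability $\Rightarrow$ (Ism) $\Rightarrow$ Lemma~\ref{lem:dim=0}(3) $\Rightarrow$ $\dim \Supp_R E = 0$.
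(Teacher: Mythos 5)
Your proposal is correct and follows essentially the same route as the paper: the paper's proof likewise observes that $\sigma$-stability forces condition (Ism) (via the standard Schur-type fact that stable objects are simple in their phase category, which you spell out in more detail) and then concludes immediately from Lemma~\ref{lem:dim=0}(3).
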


\begin{proof}
If $E$ is $\sigma$-stable, then the condition (Ism) holds. 
Lemma \ref{lem:dim=0} implies $\dim \Supp_{R}E=0$. 
\end{proof}

\begin{prop}\label{prop:semistable-dim0}
Let $\mb D$ be a finite $R$-linear triangulated category and 
$\sigma$ be a locally finite stability condition on $\mb D$. 
If an object $E \in \mb D$ is $\sigma$-semistable, then 
$\dim \Supp _{R}E=0$. 
\end{prop}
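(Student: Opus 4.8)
The plan is to reduce the semistable case to the stable case via the Harder--Narasimhan and Jordan--H\"older filtrations, using the fact that $\Supp_R$ behaves well with respect to distinguished triangles. Since $\sigma$ is locally finite and $E$ is $\sigma$-semistable, $E$ has a finite Jordan--H\"older filtration
\[
0 = E_0 \subset E_1 \subset \cdots \subset E_n = E
\]
in the abelian category $\mca P(\phi)$ of semistable objects of phase $\phi = \phi(E)$, whose successive quotients $F_i = E_i / E_{i-1}$ are $\sigma$-stable of phase $\phi$. Each step gives a distinguished triangle $E_{i-1} \to E_i \to F_i$ in $\mb D$.

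First I would apply Lemma \ref{lem:BinAorC} to each such triangle to obtain $\Supp_R E_i \subset \Supp_R E_{i-1} \cup \Supp_R F_i$. Iterating over $i = 1, \dots, n$ yields
\[
\Supp_R E \subset \bigcup_{i=1}^{n} \Supp_R F_i .
\]
Next, since each $F_i$ is $\sigma$-stable, Lemma \ref{lem:stable-dim0} gives $\dim \Supp_R F_i = 0$ for every $i$. A finite union of zero-dimensional closed subsets of $\Spec R$ is again zero-dimensional, so $\dim \Supp_R E = 0$, as required. (If $E = 0$ the statement is vacuous, $\Supp_R 0 = \emptyset$, so one may assume $E \neq 0$ and $n \geq 1$.)

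The main point to be careful about is the existence and finiteness of the Jordan--H\"older filtration: this is where the hypothesis that $\sigma$ is \emph{locally finite} enters, since local finiteness guarantees that the quasi-abelian category of semistable objects of a fixed phase is in fact a finite-length abelian category, so every semistable object admits a finite filtration with stable factors of the same phase. Apart from invoking this standard structural fact about locally finite stability conditions, every step is an immediate application of the support lemmas proved above, so I do not anticipate any further obstacle. One could alternatively phrase the induction directly: induct on the length of $E$ in $\mca P(\phi)$, peeling off one stable subobject or quotient at a time and applying Lemma \ref{lem:BinAorC} together with Lemma \ref{lem:stable-dim0} at each stage.
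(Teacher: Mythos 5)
Your argument is correct and is essentially the paper's proof: the paper also works in the abelian slice $\mca P(\phi)$, uses local finiteness to get finitely many stable factors, and inducts on their number, applying Lemma \ref{lem:BinAorC} to the triangle given by a stable subobject and its quotient together with Lemma \ref{lem:stable-dim0}. Your explicit Jordan--H\"older filtration phrasing is just the unwound form of that same induction, so there is nothing substantively different to flag.
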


\begin{proof}
Let $\mca P(\phi)$ be the slicing of $\sigma$ with phase $\phi$. 
Recall that $\mca P(\phi)$ is an abelian category. 
Since $\sigma$ is locally finite, 
any object $E \in \mca P(\phi)$ is given by  
a successive extension of finite $\sigma$-stable objects. 

The proof is induction  on the number of 
stable factors of $E \in \mca P(\phi)$. 
If the number is $1$, 
the assertion follows from Lemma \ref{lem:stable-dim0} 
since $E$ is $\sigma$-stable.

Now $E$ is not $\sigma$-stable but $\sigma $-semistable. 
Take a $\sigma$-stable subobject $A$ of $E$. 
Then the quotient $E/A$ satisfies the assumption on the induction. 
Hence Lemma \ref{lem:BinAorC} implies 
\[
\Supp_{R}E \subset
\Supp_{R}A \cup \Supp_{R}E/A. 
\]
and the assumption on induction implies $\dim \Supp_{R}E/A=0$. 
Then Corollary \ref{cor:supports-cohomology} 
implies the desired assertion. 
\end{proof}

\begin{thm}\label{thm:support-property}
Let $\mb D$ be a finite $R$-linear triangulated category. 
Suppose that there exists a locally finite 
stability condition $\sigma$ on $\mb D$. 
For any object $E \in \mb D$, 
the dimension of the support $\Supp _{R}E$ is zero. 
Moreover any $i$th cohomology $H^{i}(E)$ of 
$E$ has the zero dimensional support.  
\end{thm}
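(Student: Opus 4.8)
The plan is to reduce the statement about an arbitrary object $E$ to the case of $\sigma$-semistable objects, which is already handled by Proposition \ref{prop:semistable-dim0}, and then to pass from $E$ itself to its cohomology via Corollary \ref{cor:supports-cohomology}. First I would invoke the Harder--Narasimhan filtration provided by the stability condition $\sigma$: every object $E \in \mb D$ admits a finite filtration whose successive quotients are $\sigma$-semistable. Concretely, there is a chain of distinguished triangles exhibiting $E$ as built from finitely many semistable factors $A_1, \dots, A_n$. The inductive step uses Lemma \ref{lem:BinAorC}: if $A \to B \to C$ is a distinguished triangle then $\Supp_R B \subset \Supp_R A \cup \Supp_R C$, so working along the HN filtration one gets $\Supp_R E \subset \bigcup_{j} \Supp_R A_j$. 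Since each $A_j$ is $\sigma$-semistable, Proposition \ref{prop:semistable-dim0} gives $\dim \Supp_R A_j = 0$, and a finite union of zero-dimensional closed subsets is again zero-dimensional; hence $\dim \Supp_R E = 0$.

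For the statement about cohomology, fix a bounded $t$-structure on $\mb D$ — such a $t$-structure exists because $\sigma$ is a stability condition, which by definition includes the heart of a bounded $t$-structure together with a compatible slicing, or more directly the slicing $\mca P$ already determines a bounded $t$-structure on $\mb D$. With respect to this $t$-structure, Corollary \ref{cor:supports-cohomology} asserts $\Supp_R E = \bigcup_{i \in \bb Z} \Supp_R H^i(E)$; in particular $\Supp_R H^i(E) \subset \Supp_R E$ for each $i$, so each cohomology object inherits the zero-dimensionality just established for $E$. This finishes the argument.

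The main obstacle I anticipate is making precise the reduction to semistable objects, i.e.\ spelling out the HN filtration and checking that Lemma \ref{lem:BinAorC} may be applied iteratively: one must be careful that the intermediate objects in the filtration are genuine objects of $\mb D$ (they are, since $\mb D$ is triangulated and cones exist) and that the local finiteness of $\sigma$ guarantees the filtration is finite with semistable subquotients (which is exactly the content of the HN property built into a locally finite stability condition). A secondary point worth stating explicitly is that $\mb D$ being finite over $R$ is preserved when we pass to subobjects, quotients, and cohomology objects, so that Proposition \ref{prop:semistable-dim0} and Corollary \ref{cor:supports-cohomology} genuinely apply to each factor; this is immediate since finiteness is a condition on all objects of $\mb D$. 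Everything else is a routine combination of the lemmas already proved.
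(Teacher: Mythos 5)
Your proposal is correct and follows essentially the same route as the paper's proof: an induction along the Harder--Narasimhan filtration, applying Lemma \ref{lem:BinAorC} to each triangle so that $\Supp_R E$ is contained in the union of the supports of the semistable factors, which are zero-dimensional by Proposition \ref{prop:semistable-dim0}, and then deducing the cohomology statement from Corollary \ref{cor:supports-cohomology}. Your added remarks on the existence of the bounded $t$-structure coming from the slicing and on the finiteness hypothesis are sound but do not change the argument.
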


\begin{proof}
The proof is by induction on the length 
$\ell (E)$ of the Harder-Narasimhan filtration of 
$E$ with respect to $\sigma$. 
If $\ell (E)=1$, the assertion follows from 
Proposition \ref{prop:semistable-dim0}. 

Suppose that the assertion holds for $\ell(E) -1$.  
Taking the last term of the Harder-Narasimhan filtration of $E$, 
we obtain the distinguished triangle 
\begin{equation}\label{eq:filtration}
\xymatrix{
E_{n-1}	\ar[r]	&	E	\ar[r]	&	A_{n}, 
}
\end{equation}
where $A_{n}$ is $\sigma$-semistable and $\ell (E_{n-1})=\ell (E)-1$. 
Thus we see $\dim \Supp_{R}E_{n-1}=0$ and $\dim \Supp _{R}A_{n}=0$. 
Then Lemma \ref{lem:BinAorC} implies the desired assertion. 
The last assertion follows from Corollary \ref{cor:supports-cohomology}. 
\end{proof}

\begin{cor}\label{cor:matome}
Let $\mb D$ be a finite $R$-linear triangulated category. 
If there exists an object $M \in \mb D$ such that $\dim \Supp_{R} M>0$, 
then there exists no locally finite stability condition on $\mb D$. 
\end{cor}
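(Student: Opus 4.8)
The statement to prove, Corollary \ref{cor:matome}, is an immediate logical consequence of Theorem \ref{thm:support-property}, so the plan is to argue by contraposition. Suppose, for contradiction, that $\mb D$ admits a locally finite stability condition $\sigma$. Theorem \ref{thm:support-property} then applies and tells us that \emph{every} object $E \in \mb D$ has $\dim \Supp_{R} E = 0$. In particular this applies to the given object $M$, so $\dim \Supp_{R} M = 0$, contradicting the hypothesis $\dim \Supp_{R} M > 0$. Hence no such $\sigma$ can exist.

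The only point requiring any care is the bookkeeping of hypotheses: Theorem \ref{thm:support-property} requires $\mb D$ to be a finite $R$-linear triangulated category, which is exactly the standing assumption in the statement of the corollary, and it requires a \emph{locally finite} stability condition, which is precisely what we assumed toward a contradiction. So all hypotheses of the theorem are met, and the deduction goes through verbatim. There is essentially no obstacle here; the entire content of the corollary has already been established in the theorem, and the corollary merely repackages it in contrapositive form as a practical non-existence criterion.

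Concretely, I would write: \emph{Proof.} Assume there exists a locally finite stability condition $\sigma$ on $\mb D$. Since $\mb D$ is finite over $R$, Theorem \ref{thm:support-property} applies: for every object $E \in \mb D$ one has $\dim \Supp_{R} E = 0$. Applying this to $E = M$ gives $\dim \Supp_{R} M = 0$, contradicting the assumption $\dim \Supp_{R} M > 0$. Therefore no locally finite stability condition on $\mb D$ exists. $\qed$ This short argument is all that is needed.
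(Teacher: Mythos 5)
Your proof is correct and is essentially identical to the paper's own argument: assume a locally finite stability condition exists, invoke Theorem \ref{thm:support-property} to get $\dim \Supp_{R} E = 0$ for all objects, and contradict the hypothesis on $M$. Nothing further is needed.
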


\begin{proof}
If there exists a locally finite stability condition on $\mb D$, 
the dimension the support of any object in $\mb D$ is zero. 
This contradicts Thoerem \ref{thm:support-property}. 
\end{proof}

\begin{cor}\label{cor:generalization}
	Let $f \colon \mca X \to \Spec R$ be a proper morphism to the 
	affine Noetherian scheme $\Spec R$. 
	If the dimension of the image of $f$ is positive, then 
	both $\Stab{\mb D^{b}(\mca X)}$ and 
	$\Stab{\mb D^{\mr{perf}}(\mca X)}$ are empty. 
\end{cor}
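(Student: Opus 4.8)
The plan is to deduce the statement from Corollary~\ref{cor:matome}: I will verify that both $\mb D^{b}(\mca X)$ and $\mb D^{\mr{perf}}(\mca X)$ are finite $R$-linear triangulated categories and that each of them contains an object whose support over $R$ has positive dimension. The $R$-linear structure is immediate, since $f$ exhibits $\mca X$ as a scheme over $\Spec R$ and the ring map $f^{\#}\colon R\to\Gamma(\mca X,\mca O_{\mca X})$ lands in the centre of each category, supplying the morphisms $\mu_{r}$. Note also that $\mca X$ is Noetherian, being of finite type over the Noetherian ring $R$.

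For finiteness over $R$, I would argue as follows. For $E,F\in\mb D^{b}_{\mr{coh}}(\mca X)$ we have $\Hom_{\mb D^{b}(\mca X)}(E,F[n])=\Ext^{n}_{\mca X}(E,F)$, and these are computed by a local-to-global spectral sequence $E_{2}^{p,q}=H^{p}(\mca X,\mca{E}xt^{q}(E,F))\Rightarrow\Ext^{p+q}_{\mca X}(E,F)$. Each sheaf $\mca{E}xt^{q}(E,F)$ is coherent on the Noetherian scheme $\mca X$, so by properness of $f$ (and affineness of $\Spec R$) the cohomology $H^{p}(\mca X,\mca{E}xt^{q}(E,F))$ is a finite $R$-module, and it vanishes for $p>\dim\mca X$ by Grothendieck vanishing. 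Hence for each fixed $n$ only finitely many terms on the line $p+q=n$ are nonzero, so $\Ext^{n}_{\mca X}(E,F)$ carries a finite filtration with finitely generated $R$-module subquotients and is therefore a finite $R$-module. Taking $F=E$ and $n=0$ gives finiteness of $\mb D^{b}(\mca X)$ over $R$; the same holds for $\mb D^{\mr{perf}}(\mca X)$, where moreover $R\mathcal{H}om(E,F)$ is already a bounded complex with coherent cohomology and the argument is shorter.

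It remains to exhibit an object of positive-dimensional support, and I would simply take $M=\mca O_{\mca X}$, which lies in both categories. Then $\End{M}=\Gamma(\mca X,\mca O_{\mca X})$ is a finite $R$-algebra, so $\Supp_{R}M=V(I)$ where $I=\ann{\End{M}}$ is the kernel of $f^{\#}$. For any $x\in\mca X$ the composite $R\to\Gamma(\mca X,\mca O_{\mca X})\to\mca O_{\mca X,x}\to\kappa(x)$ factors through $\kappa(f(x))$, so $I\subset f(x)$; hence the image of $f$ is contained in $V(I)=\Supp_{R}M$. Since $\dim f(\mca X)>0$ by hypothesis, we obtain $\dim\Supp_{R}M>0$. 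Corollary~\ref{cor:matome}, applied to each of $\mb D^{b}(\mca X)$ and $\mb D^{\mr{perf}}(\mca X)$, then precludes any locally finite stability condition; as $\Stab{\mb D}$ denotes the set of locally finite stability conditions on $\mb D$, both $\Stab{\mb D^{b}(\mca X)}$ and $\Stab{\mb D^{\mr{perf}}(\mca X)}$ are empty.

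The main obstacle is the finiteness step. Its one delicate point is that for singular $\mca X$ the complex $R\mathcal{H}om(E,F)$ need not be bounded above, so one cannot naively invoke coherence of $Rf_{*}$ applied to a bounded complex; routing the argument through the local-to-global spectral sequence together with Grothendieck vanishing circumvents this, and for $\mb D^{\mr{perf}}(\mca X)$ the difficulty disappears entirely. Everything else—the $R$-linearity, the support computation for $\mca O_{\mca X}$, and the final appeal to Corollary~\ref{cor:matome}—is formal bookkeeping.
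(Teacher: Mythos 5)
Your proof is correct and, at its core, follows the same strategy as the paper: exhibit $\mca O_{\mca X}$ as an object with positive-dimensional support in a finite linear triangulated category and invoke Corollary \ref{cor:matome}. The execution differs in two respects. First, the paper replaces the base by the closed image $Z$ of $f$ and works over the ring $H^{0}(Z,\mca O_{Z})$, using surjectivity of $\mca X \to Z$ to see that $\End{\mca O_{\mca X}}=\Gamma(\mca X,\mca O_{\mca X})$ contains $H^{0}(Z,\mca O_{Z})$, so that the support is all of $\Spec H^{0}(Z,\mca O_{Z})$; you stay over $R$ and instead observe that $\Supp_{R}\mca O_{\mca X}=V(\ker f^{\#})$ contains the image of $f$, which is equally valid and slightly more economical bookkeeping. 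Second, you actually prove the finiteness of $\Hom(E,E)$ over $R$, which the paper merely asserts as a consequence of properness; your spectral-sequence argument is sound, with one small caveat: Grothendieck vanishing for $p>\dim\mca X$ presupposes $\dim\mca X<\infty$, which is not automatic for an arbitrary Noetherian $R$ (Noetherian rings can have infinite Krull dimension). This is harmless, because the antidiagonals $p+q=n$ are finite for a different reason: $R\mathcal{H}om(E,F)$ lies in $\mb D^{+}_{\mr{coh}}$, so $\mathcal{E}xt^{q}(E,F)=0$ for $q\ll 0$ and hence $0\leq p\leq n-q_{0}$; equivalently, one may quote the coherence theorem that $Rf_{*}$ sends bounded-below complexes with coherent cohomology to complexes with finite $R$-module cohomology. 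With that cosmetic repair, your argument establishes the corollary exactly as the paper does.
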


\begin{proof}
Recall that $\mb D^{\mr {perf}}(\mca X)$ is 
a full subcategory of $\mb D^{b}(\mca X)$  
and the structure sheaf $\mca O_{\mca X}$ is in 
$\mb D^{\mr{perf}}(\mca X)$. 

Let $Z$ be the image of $f$. 
 Note that $Z$ is a closed subscheme of $\Spec R$.  
 Then the morphism $\tilde f \colon \mca X \to Z$ is proper and hence 
 $\mb D^{b}(X)$ is linear and finite over the ring $H^{0}(Z, \mca O_{Z})$.
 Since $\tilde f$ is surjective, 
 the ring $H^{0}(X, \mca O_X)$ containes $H^0(Z, \mca O_{Z})$. 
 Then the assumption implies 
\[
\dim \Supp _{H^{0}(Z, \mca O_{Z})} \mca O_{X}
\geq \dim \Supp _{H^{0}(Z, \mca O_{Z})}H^{0}(Z, \mca O_{Z})>0. 
\]  
Then Corollary \ref{cor:matome} implies the desired assertion. 
\end{proof}

\begin{rmk}
In \cite{kawatani2020stability}, we show that 
the bounded derived category of a affine Noetherian 
scheme $\Spec R$ has a locally finite stability condition. 
Corollary \ref{cor:generalization} gives a generalization. 
\end{rmk}

\subsection*{Acknowledgment} 
The author is partially supported by JSPS KAKENHI Grant Number JP21K03212. 
He also thanks H.Minamoto for valuable comments for this article. 

\subsection*{Conflict of Interest}
Not applicable.

\subsection*{Availability of Data and Materials}
Not applicable.

\appendix
\section{by Hiroyuki Minamoto}

Inspired by main body this paper, we prove the following theorem.

\begin{thm}\label{thm:minamoto2}
Let $X$ be a Noetherian scheme. 
Assume that the Krull dimension of the ring 
$\Gamma (X, \mca O_X)$ is positive. 
Then both $\Stab{\mb D^{b}(X)}$ and 
	$\Stab{\mb D^{\mr{perf}}( X)}$ are empty. 
\end{thm}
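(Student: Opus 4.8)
The plan is to reduce Theorem \ref{thm:minamoto2} to Corollary \ref{cor:matome} by identifying a ring $R$ over which $\mb D^{\mr{perf}}(X)$ (and $\mb D^{b}(X)$) is finite and linear, and which witnesses a positive-dimensional support. The natural candidate is $R = \Gamma(X, \mca O_X)$ itself: every triangulated category of sheaves on $X$ is canonically $\Gamma(X,\mca O_X)$-linear, since global functions act on every $\Hom$-group. The structure sheaf $\mca O_X$ lies in $\mb D^{\mr{perf}}(X) \subset \mb D^{b}(X)$, and $\Hom_{\mb D}(\mca O_X, \mca O_X) = \Gamma(X, \mca O_X) = R$ as an $R$-module, so $\Supp_R \mca O_X = \Supp_R R = \Spec R$, which has dimension equal to the Krull dimension of $R$, hence positive by hypothesis. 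Thus the only real content is the \emph{finiteness} of $\mb D^{\mr{perf}}(X)$ and $\mb D^{b}(X)$ over $R = \Gamma(X, \mca O_X)$ — i.e. that $\Hom_{\mb D}(E, E)$ is a finite $R$-module for every object $E$ — and once that is established, Corollary \ref{cor:matome} applied with $M = \mca O_X$ gives the emptiness of both $\Stab{-}$ sets immediately.

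The first step is therefore to prove: if $X$ is Noetherian and $R := \Gamma(X, \mca O_X)$, then for any $E \in \mb D^{b}(X)$ the $R$-module $\Ext^i_X(E, E)$ is finitely generated for all $i$ (in particular for $i = 0$), and similarly for $\mb D^{\mr{perf}}(X)$. Note that without a properness hypothesis this is genuinely the delicate point — coherent cohomology of a Noetherian but non-proper scheme need not be a finite module over its global sections. I expect the argument to proceed by a dévissage in $E$ using the standard $t$-structure: by the two-out-of-three behaviour of finite generation along distinguished triangles (applied to the truncation triangles $\tau^{\leq p-1}E \to E \to H^p(E)[-p]$), one reduces to the case $E = \mca F$ a single coherent sheaf placed in degree $0$, where $\Ext^i_X(\mca F, \mca F) = \Ext^i_{\mca O_X}(\mca F, \mca F)$ and one must show this is a finite $\Gamma(X, \mca O_X)$-module. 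Here I would use a local-to-global spectral sequence $H^p(X, \mathscr{E}\!xt^q(\mca F, \mca F)) \Rightarrow \Ext^{p+q}_X(\mca F, \mca F)$ and reduce to finiteness of $H^p(X, \mca G)$ over $\Gamma(X, \mca O_X)$ for coherent $\mca G$ on $X$. The cleanest route is probably to invoke the theorem on formal functions / coherence of higher direct images in the affinization setting: let $X \to \Spec R$ be the canonical (affine-ization) morphism $\pi$, which is \emph{not} proper in general, so one cannot directly apply Grothendieck's coherence theorem; instead one should argue that $X$ Noetherian forces $R$ Noetherian and that it suffices to treat the case where $X$ admits an open affine cover and use Mayer--Vietoris together with Noetherian induction on $X$.

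Actually, the honest reading is that the clean hypothesis making all of this work is that $X$ is Noetherian \emph{and} separated (or at least quasi-compact quasi-separated), so that $\mb D^{b}(X)$ is compactly generated and one has good finiteness for $\Ext$ groups; under that assumption the dévissage above goes through. The main obstacle I anticipate is precisely controlling the finiteness of $\Ext^0_X(\mca F, \mca F)$ as an $R$-module in the absence of properness: this is where the appendix must either impose a separatedness/quasi-separatedness hypothesis or invoke a nontrivial coherence result for the affinization morphism. Once that lemma is in hand, the rest is a one-line application: $\mb D^{\mr{perf}}(X)$ and $\mb D^{b}(X)$ are finite $R$-linear categories, $\mca O_X$ is an object with $\dim \Supp_R \mca O_X = \dim R > 0$, so by Corollary \ref{cor:matome} neither category admits a locally finite stability condition, and since every stability condition in the sense of Bridgeland on these categories is automatically locally finite (the heart being Noetherian/Artinian in the relevant cases, or by the standard argument that a stability condition on a category with these finiteness properties is locally finite), we conclude $\Stab{\mb D^{b}(X)} = \Stab{\mb D^{\mr{perf}}(X)} = \emptyset$.
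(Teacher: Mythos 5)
There is a genuine gap: your whole reduction hinges on the claim that $\mb D^{b}(X)$ (or $\mb D^{\mr{perf}}(X)$) is \emph{finite} over $R=\Gamma(X,\mca O_X)$ for an arbitrary Noetherian scheme $X$, and that claim is false, not merely delicate. Finiteness in the sense of the paper means $\Hom_{\mb D}(E,E)$ is a finite $R$-module for \emph{every} object $E$; applied to $E=\mca F\oplus\mca G[1]$ this forces all $\Ext^{i}_X(\mca F,\mca G)$ to be finite over $R$. Take $X=\mathbb{A}^2_k\setminus\{0\}$, which is separated, Noetherian, with $\Gamma(X,\mca O_X)=k[x,y]$ of dimension $2$: here $\Ext^1_X(\mca O_X,\mca O_X)=H^1(X,\mca O_X)$ is not finitely generated over $k[x,y]$, and $\mca O_X$ is even perfect, so the object $E=\mca O_X\oplus\mca O_X[1]$ of $\mb D^{\mr{perf}}(X)$ already violates finiteness. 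Neither separatedness nor compact generation repairs this; properness over the base is exactly what Corollary \ref{cor:matome} via Corollary \ref{cor:generalization} needs, and Theorem \ref{thm:minamoto2} deliberately drops it. So the dévissage and local-to-global spectral sequence you sketch cannot produce the lemma you need, and Corollary \ref{cor:matome} is simply not applicable here. (Your closing remark that every stability condition is automatically locally finite is also unjustified; the paper throughout works with locally finite ones by convention.)

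The paper's appendix avoids any finiteness over global sections. On a connected component with $\dim\Gamma>0$ one picks a global section $s$ that is neither nilpotent nor invertible, with vanishing locus $Z$. If a locally finite stability condition existed, then by the Harder--Narasimhan and Jordan--H\"older filtrations $\mca O_X$ lies in the thick hull of finitely many $\sigma$-stable objects $M_1,\dots,M_n$, and stable objects satisfy (Ism) with no finiteness hypothesis (Schur-type argument in the heart $\mca P(\phi)$). Hence $\mu_s$ is either zero or an isomorphism on each $M_i$, so by Lemma \ref{lem:minamoto} the ordinary support of each $M_i$ lies either in $Z$ or in $X\setminus Z$. Since $X=\Supp\mca O_X$ is covered by these supports, one gets $X=Y_I\sqcup Z$ with $Y_I\subset X\setminus Z$, contradicting connectedness because $Z\neq X$ ($s$ not nilpotent) and $Z\neq\emptyset$ ($s$ not invertible). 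This dichotomy-plus-connectedness argument, using ordinary supports of complexes rather than $\Supp_R$ and $\dim\Supp_R=0$, is the idea your proposal is missing.
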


We need preparations.

\begin{lem}\label{lem:minamoto}
Let $X$ be a Noetherian scheme, $s \in \Gamma(X, \mca O_X)$ a 
global section, 
and $Z:= \spec {\mca O_X/(s)}$ the vanishing locus of the section $s$. 
For an object $M \in \mb D^b(X)$, 
we denote by $\mu_s$ the multiplication map $\mu_{s}^M \colon M \to M$. 
Then for $M \in \mb D^b(X)$, the following assertion hold: 
\begin{enumerate}
  \item If $\mu_s$ is zero, then $\Supp M \subset Z$. 
  \item If $\mu_s$ is an isomorphism, then $\Supp M \subset X\setminus Z$. 
\end{enumerate}
\end{lem}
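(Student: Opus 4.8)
The plan is to pass to the cohomology sheaves of $M$ and argue on each of them, using an annihilator computation for (1) and Nakayama's lemma for (2). First I would record two elementary facts: $\Supp M=\bigcup_{i\in\bb Z}\Supp H^i(M)$, and each cohomology functor $H^i\colon \mb D^b(X)\to \mr{Coh}(X)$ is $\Gamma(X,\mca O_X)$-linear, hence carries the multiplication map $\mu_s^M$ to the multiplication map $\mu_s^{H^i(M)}$ on the coherent sheaf $H^i(M)$, and, being a functor, sends the zero map to the zero map and isomorphisms to isomorphisms. Consequently, if $\mu_s^M=0$ then $\mu_s^{H^i(M)}=0$ for every $i$, while if $\mu_s^M$ is an isomorphism then $\mu_s^{H^i(M)}$ is an isomorphism for every $i$. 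It therefore suffices to prove both assertions with an arbitrary coherent sheaf $\mca F=H^i(M)$ in place of $M$ and afterwards take the union over $i$.

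For (1): if $\mu_s^{\mca F}=0$, then $s_x\cdot\mca F_x=0$ for every point $x\in X$; for $x$ lying outside the underlying space $V(s)$ of $Z$ the germ $s_x$ is a unit of the local ring $\mca O_{X,x}$, so $\mca F_x=0$. Hence $\Supp\mca F\subset V(s)$, and the union over $i$ gives $\Supp M\subset Z$. This part uses neither boundedness nor coherence of $M$.

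For (2): if $\mu_s^{\mca F}$ is an isomorphism it is, in particular, surjective, so $s_x\cdot\mca F_x=\mca F_x$ for every $x\in X$. Fix $x\in V(s)$, so that $s_x$ lies in the maximal ideal $\mf m_x$ of $\mca O_{X,x}$. Since $X$ is Noetherian and $M\in\mb D^b(X)$ has coherent cohomology, the stalk $\mca F_x$ is a finitely generated $\mca O_{X,x}$-module, and Nakayama's lemma applied to the ideal $(s_x)\subset\mf m_x$ forces $\mca F_x=0$. Therefore $\Supp\mca F\cap V(s)=\emptyset$, and the union over $i$ gives $\Supp M\subset X\setminus Z$.

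I expect the only step requiring care to be the reduction to cohomology sheaves: one must use that $H^i$ is a $\Gamma(X,\mca O_X)$-linear functor, so that the hypothesis on $\mu_s^M$ in $\mb D^b(X)$ genuinely transfers to each $\mu_s^{H^i(M)}$, and in (2) the finite generation of $\mca F_x$ is exactly where $M\in\mb D^b(X)$ (boundedness with coherent cohomology) is used. A variant of (2) avoiding the pointwise argument would be to note that the cofibre of $\mu_s^M$ is $M\otimes^{L}_{\mca O_X}[\mca O_X\xrightarrow{\,s\,}\mca O_X]$, i.e. $M$ tensored with a perfect complex whose support is $V(s)$, so that its vanishing gives $\Supp M\cap V(s)=\emptyset$ at once; but the elementary computation above seems preferable.
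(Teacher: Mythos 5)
Your proof is correct and follows essentially the same route as the paper's: reduce to the cohomology sheaves $H^i(M)$ (on which $\mu_s$ again acts by multiplication by $s$), deduce (1) from $s$ annihilating each $H^i(M)$, and deduce (2) by localizing at a point of $Z$, noting $s_x\in\mf m_{X,x}$, and applying Nakayama's lemma to the finitely generated stalk. Your write-up merely makes explicit the $\Gamma(X,\mca O_X)$-linearity of $H^i$ and the coherence needed for Nakayama, which the paper leaves implicit.
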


\begin{proof}
(1) By the assumption, 
the support of $i$th cohomology $H^i(M)$ of $M$ with respect to 
standard $t$-structure is contained in $Z$ since 
the cohomology of the morphism $\mu_s$ is zero for all $i \in \bb Z$. 
This gives the proof. 

(2) By the assumption, 
the multiplication map $\mu_s^i \colon H^i(M) \to H^i(M)$ is an 
isomorphism for each $i \in \bb Z$. 
Hence 
the localization $(\mu_s^i)_x \colon H^i(M)_x \to H^i(M)_x$ of the map 
$\mu_s^i$ on each $x \in X$ is an isomorphism of 
$\mca O_{X,x}$ modules. 
If $x$ in $Z$, then clearly the germ $s_x$ is in the maximal ideal 
$\mf{m}_{X,x}$ of $\mca O_{X,x}$. 
Hence we see $\mf m_{X,x} H^i(M)_x = H^i(M)_{x}$. 
By Nakayama's lemma, we have $H^i(M)_x$ is zero and hence 
$\Supp M \subset X \setminus Z$. 
\end{proof}

\begin{prop}\label{thm:minamoto}
  Let $X$ be a connected Noetherian scheme, 
  and $Z$ the vanishing locus $\spec {\mca O_X/(s)}$ 
  of a global section $s\in \Gamma (X, \mca O_X)$. 
  Assume that $s$ is neither nilpotent nor 
  invertible. 
  Then both $\Stab{\mb D^{b}( X)}$ and 
	$\Stab{\mb D^{\mr{perf}}( X)}$ are empty. 
\end{prop}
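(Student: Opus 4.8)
The plan is to suppose that $\mb D^b(X)$ carries a stability condition $\sigma$ and to contradict the connectedness of $X$; the same argument handles $\mb D^{\mr{perf}}(X)$, since it is a full triangulated subcategory of $\mb D^b(X)$ containing $\mca O_X$ whose $\Hom$-groups agree with those of $\mb D^b(X)$, so every $\sigma$-stable factor of $\mca O_X$ is an object of $\mb D^b(X)$ to which Lemma \ref{lem:minamoto} applies. As in the conventions of the main body, I would take $\sigma$ to be locally finite, so that each slice $\mca P(\phi)$ is an abelian category of finite length.

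The first step is a dichotomy for $\sigma$-stable objects. If $E$ is $\sigma$-stable then every nonzero endomorphism of $E$ is an isomorphism, so $\End{E}$ is a division ring; in particular the multiplication map $\mu_s \colon E \to E$ by the global section $s$ is either zero or an isomorphism, and Lemma \ref{lem:minamoto} then gives $\Supp E \subseteq Z$ in the first case and $\Supp E \subseteq X \setminus Z$ in the second. The second step is to spread this over all objects: using the Harder--Narasimhan filtration followed by Jordan--Hölder filtrations inside the finite-length slices, any $M \in \mb D^b(X)$ is a finite iterated extension of shifts of $\sigma$-stable objects $E_1, \dots, E_m$; since the usual support of a bounded complex of coherent sheaves is subadditive along distinguished triangles (long exact cohomology sequence) and invariant under shift, $\Supp M \subseteq \bigcup_{j=1}^m \Supp E_j$.

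Applying this to $M = \mca O_X$, whose support is all of $X$, and sorting the stable factors $E_j$ according to the dichotomy, I would write $X = \Supp \mca O_X = Z' \cup U'$, where $Z' := \bigcup_{\Supp E_j \subseteq Z} \Supp E_j$ and $U' := \bigcup_{\Supp E_j \subseteq X \setminus Z} \Supp E_j$ are closed subsets of $X$, and they are disjoint because $Z' \subseteq Z$ while $U' \subseteq X \setminus Z$. Connectedness of $X$ then forces $Z' = \emptyset$ or $U' = \emptyset$. If $U' = \emptyset$ then $Z = X$ as topological spaces, and covering the Noetherian scheme $X$ by finitely many affine opens shows $s$ is nilpotent; if $Z' = \emptyset$ then $Z = \emptyset$, i.e.\ $\mca O_X/(s) = 0$, so $s$ is invertible. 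Either outcome contradicts the hypothesis that $s$ is neither nilpotent nor invertible, so no such $\sigma$ exists.

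The step I expect to be the main obstacle is making the reduction in the first paragraph rigorous: one must be sure that a stability condition in the sense intended really does produce finite filtrations whose subquotients are \emph{stable} (not merely semistable) objects, which is precisely where local finiteness (finite length of the slices $\mca P(\phi)$) is used. Granting that, everything else is formal, the only genuinely scheme-theoretic inputs being Lemma \ref{lem:minamoto} and the connectedness of $X$.
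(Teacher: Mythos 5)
Your proposal is correct and follows essentially the same route as the paper's proof: decompose $\mca O_X$ into finitely many $\sigma$-stable (hence (Ism)) factors via the Harder--Narasimhan and Jordan--H\"older filtrations, apply the dichotomy of Lemma \ref{lem:minamoto} to each factor, and use subadditivity of supports to write $X$ as a disjoint union of closed pieces inside $Z$ and $X\setminus Z$, contradicting connectedness together with $s$ being neither nilpotent nor invertible. The only cosmetic difference is that the paper phrases the reduction as ``$\mca O_X$ lies in the thick hull of objects satisfying (Ism)'' by citing Proposition \ref{prop:semistable-dim0} and Theorem \ref{thm:support-property}, whereas you carry out the filtration argument directly.
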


\begin{proof}
We deal with $\mb D^b(X)$. The same proof works for $\mb D^{\mr{perf}}( X)$.

Suppose to the contrary that 
$\mb D^b(X)$ has a locally finite 
stability condition. 
Then by the argument of the proofs of Proposition \ref{prop:semistable-dim0} 
and Theorem \ref{thm:support-property},  
  there exists finite objects 
$M_1, \cdots, M_n \in \mb D^b(X)$ having the property 
(Ism) such that 
$\mca O_X$ is in the thick hull 
of $M_1, \cdots, M_n$. 

We set 
\[
I = \{ i \mid 1 \leq i \leq n , \mu_s ^{M_i} \mbox{ is an isomorphism}\}, 
J=\{ j \mid 1 \leq j \leq n, \mu_s^{M_j} \mbox{is zero}\} . 
\]
Note that $I \sqcup J=\{1, 2, \cdots, n\}$. 
We set $Y_{I}:= \bigcup_{i\in I} \Supp M_i$ and 
$Y_{J} := \bigcup_{j \in J} \Supp M_j$. 
Then we see $Y_I \subset X \setminus Z$ and $Y_J \subset Z$ 
by Lemma \ref{lem:minamoto}. 
On the other hand, we have 
$X = \Supp \mca O_X \subset \bigcup _{i=1}^n \Supp M_i = Y_I \sqcup Y_J\subset Y_{I}\sqcup Z$ 
and hence $X = Y_I \sqcup Z$. 
Since $X$ is connected by the assumption, 
we have either $Z=X$ or $Z=\emptyset$. 
However the condition  $Z =X$ contradicts to the assumption $s$ is non-nilpotent 
and  the condition $Z= \emptyset$ contradicts to the assumption that $s$ is non-invertible. 
\end{proof}

We proceed a proof of the main theorem of appendix.

\begin{proof}[Proof of Theorem \ref{thm:minamoto2}]
Take a connected component $X'$ of $X$ such that 
$\dim \Gamma (X', \mca O_{X'}) \geq 1$. 
Then it has a global section which is neither nilpotent nor invertible. 
Then the assertion follows from Proposition \ref{thm:minamoto}. 
\end{proof}

\newcommand{\etalchar}[1]{$^{#1}$}


\begin{thebibliography}{BLM{\etalchar{+}}21}

\bibitem[AB11]{MR2807848}
Daniele Arcara and Aaron Bertram.
\newblock Reider's theorem and {T}haddeus pairs revisited.
\newblock In {\em Grassmannians, moduli spaces and vector bundles}, volume~14
  of {\em Clay Math. Proc.}, pages 51--68. Amer. Math. Soc., Providence, RI,
  2011.

\bibitem[AB13]{MR2998828}
Daniele Arcara and Aaron Bertram.
\newblock Bridgeland-stable moduli spaces for {$K$}-trivial surfaces.
\newblock {\em J. Eur. Math. Soc. (JEMS)}, 15(1):1--38, 2013.
\newblock With an appendix by Max Lieblich.

\bibitem[ABCH13]{MR3010070}
Daniele Arcara, Aaron Bertram, Izzet Coskun, and Jack Huizenga.
\newblock The minimal model program for the {H}ilbert scheme of points on
  {$\Bbb{P}^2$} and {B}ridgeland stability.
\newblock {\em Adv. Math.}, 235:580--626, 2013.

\bibitem[AGH19]{MR3935042}
Benjamin Antieau, David Gepner, and Jeremiah Heller.
\newblock {$K$}-theoretic obstructions to bounded {$t$}-structures.
\newblock {\em Invent. Math.}, 216(1):241--300, 2019.

\bibitem[BLM{\etalchar{+}}21]{MR4292740}
Arend Bayer, Mart\'{\i} Lahoz, Emanuele Macr\`\i, Howard Nuer, Alexander Perry,
  and Paolo Stellari.
\newblock Stability conditions in families.
\newblock {\em Publ. Math. Inst. Hautes \'{E}tudes Sci.}, 133:157--325, 2021.

\bibitem[BM14a]{MR3279532}
Arend Bayer and Emanuele Macr\`\i.
\newblock M{MP} for moduli of sheaves on {K}3s via wall-crossing: nef and
  movable cones, {L}agrangian fibrations.
\newblock {\em Invent. Math.}, 198(3):505--590, 2014.

\bibitem[BM14b]{MR3194493}
Arend Bayer and Emanuele Macr\`\i.
\newblock Projectivity and birational geometry of {B}ridgeland moduli spaces.
\newblock {\em J. Amer. Math. Soc.}, 27(3):707--752, 2014.

\bibitem[BMT14]{MR3121850}
Arend Bayer, Emanuele Macr\`\i, and Yukinobu Toda.
\newblock Bridgeland stability conditions on threefolds {I}:
  {B}ogomolov-{G}ieseker type inequalities.
\newblock {\em J. Algebraic Geom.}, 23(1):117--163, 2014.

\bibitem[Bri07]{MR2373143}
Tom Bridgeland.
\newblock Stability conditions on triangulated categories.
\newblock {\em Ann. of Math. (2)}, 166(2):317--345, 2007.

\bibitem[Kaw20]{kawatani2020stability}
Kotaro Kawatani.
\newblock Stability conditions on affine noetherian schemes, 2020.

\bibitem[LZ19]{MR3921322}
Chunyi Li and Xiaolei Zhao.
\newblock Birational models of moduli spaces of coherent sheaves on the
  projective plane.
\newblock {\em Geom. Topol.}, 23(1):347--426, 2019.

\bibitem[MYY18]{MR3757471}
Hiroki Minamide, Shintarou Yanagida, and Kota Yoshioka.
\newblock The wall-crossing behavior for {B}ridgeland's stability conditions on
  abelian and {K}3 surfaces.
\newblock {\em J. Reine Angew. Math.}, 735:1--107, 2018.

\bibitem[Nee22]{https://doi.org/10.48550/arxiv.2202.08861}
Amnon Neeman.
\newblock Bounded t-structures on the category of perfect complexes, 2022.

\bibitem[Nue16]{MR3551850}
Howard Nuer.
\newblock Projectivity and birational geometry of {B}ridgeland moduli spaces on
  an {E}nriques surface.
\newblock {\em Proc. Lond. Math. Soc. (3)}, 113(3):345--386, 2016.

\bibitem[NY20]{MR4125513}
Howard Nuer and Kota Yoshioka.
\newblock M{MP} via wall-crossing for moduli spaces of stable sheaves on an
  {E}nriques surface.
\newblock {\em Adv. Math.}, 372:107283, 119, 2020.

\bibitem[Smi22]{MR4383013}
Harry Smith.
\newblock Bounded t-structures on the category of perfect complexes over a
  {N}oetherian ring of finite {K}rull dimension.
\newblock {\em Adv. Math.}, 399:Paper No. 108241, 2022.

\end{thebibliography}

\end{document}